\title{
Billiards in a circle with trajectories circumscribing  a triangle
}
\author{Takeo Noda and Shin-ichi Yasutomi}
\newtheorem{thm}{Theorem}[section]
\newtheorem{lem}[thm]{Lemma}
\newtheorem{prop}[thm]{Proposition}
\theoremstyle{definition}
\newtheorem{example}{Example}[section]
\theoremstyle{remark}
\newtheorem{rem}{Remark}[section]
\numberwithin{equation}{section}
\begin{document}

\maketitle
\footnote[0]{2020 {\it Mathematics Subject Classification}.  37E10, 37E45, 51M09, 51M04;}

\begin{abstract}
Dogru and Tabachnikov in 2003 explored the polygonal outer billiard map in the hyperbolic plane and introduced a class of convex polygons called 'large'. They particularly sought conditions for a triangle to be classified as large.
For a large triangle, there exist  two triangles that are circumscribed around it and inscribed within the unit circle.
In the Klein-Beltrami model of hyperbolic geometry, we reformulate the conditions for a triangle to be classified as 'large'  in a more Euclidean geometric manner. A proposed measure of its Euclidean geometric size when the triangle is considered 'large' is introduced, and an evaluation of this measure is conducted.
We also provide an explicit formula for an isosceles triangle.
\end{abstract}

\section{Introduction}
Dogru and Tabachnikov \cite{DT} explored the polygonal outer billiards map in the hyperbolic plane.
The outer billiard is also known as the dual billiard.
Concerning the dual billiard, refer to Chapter 9 in \cite{T} for further details.
Let $C$ be a convex polygon in the hyperbolic plane.
Generally, from a point $v$ outside of $C$, two tangent lines can be drawn, intersecting $C$ at points $u_1$ and $u_2$ respectively.
Let us assume that the direction of 
$vu_2$ is in a counterclockwise direction compared to the direction of 
$vu_1$, with an angle of less than $\pi$.
We define $F(v)$ as the point $w$ on the extension of $vu_1$ such that $|vu_1| = |u_1w|$.
By considering limits, the map $F$ naturally extends to a set of infinity points and is denoted as $f$ when restricted to this set of infinity points.
Let $\rho(C) \in (0, 1/2)$ be the rotation number of $f$.
In \cite{DT} a class of $n$-gons ($n\geq 3$) is identified for
which the rotation number equals to $1/n$ and $f$ 
has a hyperbolic $n$-periodic orbit; 
these polygons are called “large”. They provided a condition for $C$ to be large, especially when it is a triangle.

\begin{thm}[Dogru and Tabachnikov \cite{DT}]\label{Dogru and Tabachnikov}
    $\triangle P_1P_2P_3$ is large if and only if $H > 1$ where $H$ is given by any of the following equal expressions: for $i=1,2,3$
\begin{align*}
H&=\sinh h_i \sinh a_i=\sin \alpha_i \sinh a_{i+1} \sinh a_{i+2}=2\sinh s \tanh r=\\
&=2\sqrt{\sinh  s \sinh (s-a_1) \sinh (s-a_2)  \sinh (s-a_3)}=\\
&=4\sin(K/2)\cosh(a_1/2)\cosh(a_2/2)\cosh(a_3/2),
\end{align*}
where  $a_i$ is the length of the side corresponding to the point $P_i$,
$s=(a_1+a_2+a_3)/2$, $\alpha_i$ is the angle of $P_i$, $h_i$ is the altitude
dropped on the $i$-th side, $K$ is  the area, $r$ is the radius of the inscribed circle and the index $i$ is cyclic with respect to $\mod 3$.
\end{thm}

Dogru and Tabachnikov gave the following Theorem.
\begin{thm}[Dogru and Tabachnikov \cite{DT}]
    If $C$ is a large polygon then all orbits of the dual billiard map escape to infinity.
\end{thm}

We remark that in Theorem \ref{Dogru and Tabachnikov} the condition for the rotation number to equal $1/3$ is  $H \geq 1$, with equality holding when
$f$ has a unique $3$-periodic orbit.
In \cite{DT}, the Klein-Beltrami model of hyperbolic geometry is primarily used.
In this model, line segments are  
those 
in Euclidean geometry. Furthermore, when the triangle $C$ is considered large and positioned near the center of the circle, the theorem implies a certain minimum size requirement. This seems to be the origin of the term 
``large".

However, concerning $f$, it is a transformation on $S^1$ and can be naturally interpreted as a conventional geometric map, departing from hyperbolic geometry.
For example, let $C$ be a circle lying inside ${S^1}$. 
The trajectory related to $(f^n(v)(=v_n)$ for $n=0,1,2,\ldots$ (see Figure \ref{fig:introduction}(a)) has fascinated many researchers, since  the finding of Poncelet porism (cf. \cite{DR},\cite{T}).
Even though the rotation number is rational,
Poncelet porism states that 
$f$ 
is  conjugated to the rotation.
In a generalized situation, 
Mozgawa \cite{M} considered such a billiard problem
and obtained  a theorem 
similar to  Poncelet porism by using two ovals instead of 
circles.
Mozgawa et al. \cite{CMM}, \cite{M} called such a billiard problem a bar billiard.
Cima et al. \cite{CGM} considered   
a bar billiard problem, where the unit circle 
is placed inside the the curve $\{x^{2m}+y^{2m}=2\},\ m\in {\mathbb{Z}_{>0}}$, and
showed the rotation 
number
is $\frac{1}{4}$ and
the map associated with the billiard problem is not
conjugated to a rotation except for $m=1$.
Theorem \ref{Dogru and Tabachnikov} can also be interpreted as addressing the bar billiard problem related to a circle and a triangle and it demonstrates intriguing properties related to circles and triangles; i.e.,
for a large triangle, there exist  two triangles that are circumscribed around it and inscribed within the circle.

For $P_1, P_2$ in the unit circle, the set of $P_3$ such that the triangle $\triangle P_1P_2P_3$ is large lies on or outside the circumference of an ellipse (see Figure \ref{fig:introduction2}).
We provide a  description of this ellipse as a reformulation of Theorem \ref{Dogru and Tabachnikov}.
As an application of it, 
for large triangles, their size are measured from the perspective of standard Euclidean geometry, and their characteristics are investigated.
The following assumes quantities in Euclidean geometry, such as length and angle, and also includes congruence and similarity.
For a triangle $C$, define $\kappa(C)$ as follows: if $C$ is an acute-angled triangle, let it be the radius of its circumcircle; if $C$ is an obtuse-angled triangle or an right triangle, let it be half the length of its longest side.
Let $\mu(C)$ denote the upper bound of $\kappa(C')$ for triangles $C'$ similar to $C$ and satisfying the condition of being not large.
Then, if $\kappa(C)>\mu(C)$, 
$C$ is large.
We establish $\mu(C)\geq 1/2$ with equality holding only when it is an equilateral triangle.
We also provide an explicit formula of $\mu(C)$ for an isosceles triangle $C$.

In the context of the calculations, the manipulation of equations was facilitated using Sage Mathematics Software \cite{Sage}.

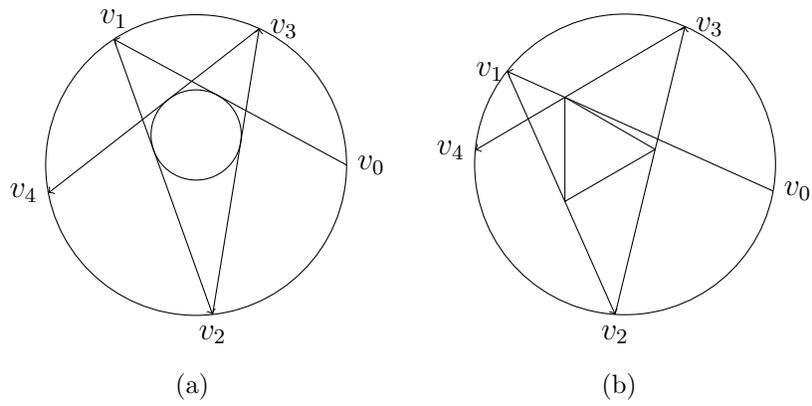
\begin{figure}
    \centering
    \subfigure[]{
    \begin{tikzpicture}
\draw(0,0) circle (2);
\draw(0,0.4) circle (0.6);
\draw[->] (2*1, 0)--(-0.547039*2, 0.837107*2);
\draw[->] (-0.547039*2, 0.837107*2)--(0.109269*2, 2*-0.994012);
\draw[->] (0.109269*2, 2*-0.994012)--(0.419377*2, 0.907812*2);
\draw[->] (0.419377*2, 0.907812*2)--(-0.982513*2, -0.186196*2);
);
 \draw (2*1, 0)node[right]{$v_0$};
 \draw (-0.547039*2, 0.837107*2)node[above]{$v_1$};
\draw (0.109269*2, 2*-0.994012)node[below]{$v_2$};
\draw (0.419377*2, 0.907812*2)node[right]{$v_3$};
\draw (-0.982513*2, -0.186196*2)node[left]{$v_4$};
    
    \end{tikzpicture}
    }
    \subfigure[]{
    \begin{tikzpicture}
\draw(0,0) circle (2);
\draw [](0.2*2, 0.1*2)--(-0.4*2, 0.44641*2)--(-0.4*2, -0.24641*2)--cycle;
\draw[->] (0.398194*2, 2*0.917301)--(-0.995466*2, 0.0951178*2);
\draw[->] (-0.0662161*2, -0.997805*2)--(0.398194*2, 2*0.917301);
\draw[->] (-0.784777*2, 0.619778*2)--(-0.0662161*2, -0.997805*2);
\draw[->] (0.984166*2, -0.177249*2)--(-0.784777*2, 0.619778*2);
);

 \draw (0.984166*2, -0.177249*2)node[right]{$v_0$};

 \draw (-0.784777*2+0.1, 0.619778*2)node[left]{$v_1$};
\draw (-0.0662161*2, -0.997805*2)node[below]{$v_2$};
\draw (0.398194*2, 2*0.917301)node[right]{$v_3$};
\draw (-0.995466*2, 0.0951178*2)node[left]{$v_4$};
    \end{tikzpicture}
    }
    
    \caption{Trajectories circumscribing  a circle or a triangle.}
    \label{fig:introduction}
\end{figure}

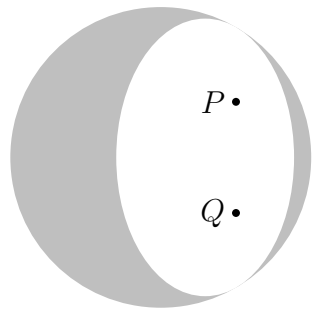
\begin{figure}
    \centering
    \begin{tikzpicture}
\fill[lightgray] (0,0) circle (2);
\fill[white](0.295081967213115*2,0) circle (0.591073979584261*2 and 0.923287071496965*2);

\draw (0.5*2,0.369735774108543*2)node[left]{$P$};
\draw (0.5*2,-0.369735774108543*2)node[left]{$Q$};

\coordinate (P) at (0.5*2,0.369735774108543*2);
\fill (P) circle [radius=1.5pt];
\coordinate (Q) at (0.5*2,-0.369735774108543*2);
\fill (Q) circle [radius=1.5pt];

 \end{tikzpicture}

    \caption{Ellipse associated with a point of rotation number $\frac{1}{3}$.}
    \label{fig:introduction2}
\end{figure}

\section{Description of Ellipse}
At the outset, we will verify the foundational knowledge required for understanding the rotation number in this paper.
For an orientation preserving homeomorphism $f$ on ${S^1}$, let $\rho_f$ be the rotation number of $f$; i.e.,
$$
\rho_f:=\lim_{n\to \infty}\dfrac{\overline{f}^n(x)-x}{n},
$$
where $x\in {\mathbb R}$ and $\overline{f}$ is the lift of $f$ on ${\mathbb R}$.

Then, following results (for example see \cite{KB}) are known.

\begin{lem}\label{fund-3}
Let $f$ be an orientation preserving homeomorphism  on ${S^1}$.
\begin{enumerate}
\item[(1)] $\rho_f$ is well defined.
\item[(2)] $\rho_f$ is a rational $\dfrac{p}{q}$, if and only if
there exits a $x\in {\mathbb R}$ such that
$\overline{f}^q(x)=x+p$, where $p$ and $q$ are relatively prime positive integers.  
\item[(3)] When $\rho_f$ is considered as a function on the set of  orientation preserving homeomorphisms on ${S^1}$, 
it is continuous on $C^0$ topology.
\end{enumerate}
\end{lem}

For a while, we will utilize the Poincaré hyperbolic disk model and the Poincaré half-plane model as hyperbolic geometries. Later on, we will employ the Beltrami-Klein disk model. In either model, let $d(P, Q)$ denote the hyperbolic distance between points $P$ and $Q$.

We define that $\delta(P,Q,R)$ is the altitude dropped on the $PQ$ from $R$.
$\delta(P,Q,R)$ is given by
\begin{align*}
\mathrm{arcsinh}\left(\dfrac{\sqrt{(-\cosh(\alpha-\gamma)+\cosh \beta)(\cosh(\alpha+\gamma)+\cosh \beta)}}{\sinh\gamma} \right),
\end{align*}
where $\alpha=d(Q,R), \beta=d(R,P)$ and $\gamma=d(P,Q)$.

For $P,Q\in D$, we define
\begin{align*}
 \Delta(P,Q):=\log \left(\dfrac{e^{d(P,Q)}+1}{e^{d(P,Q)}-1}\right)(=\log\ (\coth(\frac{d(P,Q)}{2}))). 
\end{align*}
The following theorem is a paraphrase of Theorem \ref{Dogru and Tabachnikov} authored by Dogru and Tabachnikov. An elementary proof is provided.

\begin{thm}\label{t1}
Let $\triangle PQR$ be a triangle in $D$.
Let $m\in {\mathbb Z}_{\geq 0}$ be the number of triangles inscribed in ${S^1}$ and 
circumscribing $\triangle PQR$. 
Then,
\begin{align*}
m=\begin{cases}0 & \text{if }\delta(P,Q,R)<\Delta(P,Q),\\
1 & \text{if }\delta(P,Q,R)=\Delta(P,Q),\\
2 & \text{if }\delta(P,Q,R)>\Delta(P,Q).
\end{cases}
\end{align*}
In particular, $\triangle PQR$ is large if and only if $\delta(P,Q,R)>\Delta(P,Q)$.
\end{thm}

\begin{proof}

Let $\mathbb{H}$ be the upper half plane $\{z \in {\mathbb C} \mid \Im(z)>0
\}$.
In the proof, we use the Poincaré half-plane model.
For $z_1,z_2\in \mathbb{H}$, we denote $d(z_1,z_2)$ by the distance from $z_1$ to $z_2$.
We see that 
$PSL(2,{\mathbb R})$ is an isometry group of 
$\mathbb{H}$ and it acts transitively on the set
$\{(z_1,z_2)\in \mathbb{H}^2\mid d(z_1,z_2)=c\}$, where $c$ can be any positive constant.
As a result,  
$g\in PSL(2,{\mathbb R})$ exists such that $g(P)=i$ and $g(Q)$ is on the imaginary axis. 
Let $ki=g(Q)$ and we assume $k>1$ without loss of generality.
Since $P, Q$, and $R$ are not on any line, 
$g(R)$ is not on the imaginary axis. We may assume that the real part of $g(R)$ is positive without loss of generality.
Let $u+vi=g(R)$ with $u,v>0$.
In  hyperbolic geometry, a line in $\mathbb{H}$ is a semicircle that intersects the real axis perpendicularly.
We consider the circle $C_1$ with the center  $s$ and $-t, ki\in C_1$ for $s,t\in {\mathbb R}$ with $t>0$.
Then, we have
\begin{align*}
s^2+k^2=(s+t)^2,
\end{align*}
which implies $s=\frac{k^2-t^2}{2t}$.
Therefore, $2s+t=\frac{k^2}{t}$ is on $C_1$.
Similarly, the circle $C_2$ whose center is on the real axis and through  $-t$ and $i$ has the point 
$\frac{1}{t}$.
Let $C_3$ be the circle whose center is on the real axis and which passes through  $\frac{1}{t}$ and 
$\frac{k^2}{t}$. The following is the condition $u+vi\in C_3$:
\begin{align*}
\left(u-\dfrac{1+k^2}{2t}\right)^2+v^2=\left(\dfrac{k^2-1}{2t}\right)^2,
\end{align*}
which implies 
\begin{align}\label{(u^2+v^2)}
(u^2+v^2)t^2-(k^2+1)ut+k^2=0.
\end{align}
The discriminant $Di$ of formula (\ref{(u^2+v^2)}) for t is 
\begin{align*}
Di=(k^2+1)^2u^2-4k^2(u^2+v^2).
\end{align*}
We recall the definition of $m$ as the number of $C_3$, which includes $u+vi$. 
Since $Di=((k^2-1)u-2kv)((k^2-1)u+2kv)$ and $u,v>0$, we have
\begin{align}\label{m=begincases}
m=\begin{cases}0 & \text{if }v>\left(\frac{k^2-1}{2k}\right)u,\\
1 & \text{if }v=\left(\frac{k^2-1}{2k}\right)u,\\
2 & \text{if }v<\left(\frac{k^2-1}{2k}\right)u.
\end{cases}
\end{align}
We denote the set 
$\{x+yi\in \mathbb{H}\mid y=\frac{k^2-1}{2k}x\}$ by $L$.
Let $\theta\in (0,\pi/2)$ be the angle between the imaginary axis and $L$.
We will calculate the length $d$ of a vertical line from each point of L to the imaginary axis in the hyperbolic sense. Let $x+iy\in L$ with $r=\sqrt{x^2+y^2}$.
Then, the length of the curve $r\sin w +ir\cos w\ (0\leq w\leq \theta)$ is
\begin{align}\label{int_0}
d=\int_0^{\theta} \dfrac{dw}{\cos w}= \log\left(\dfrac{1+\tan(\theta/2)}{1-\tan(\theta/2)}\right).
\end{align}
We remark that $d$ depends only on $\theta$.
On the other hand, from the fact that $\tan \theta=\dfrac{2k}{k^2-1}$ we see that $\theta=\pi-2\arctan k$.
Therefore, from (\ref{int_0}),
we have
\begin{align}\label{d=log}
d=\log\left(\dfrac{k+1}{k-1}\right).
\end{align}
The distance between $i$ and $ki$ is
\begin{align}\label{int_1}
\int_1^{k} \dfrac{dy}{y}=\log k.
\end{align}
From (\ref{d=log}) and (\ref{int_1}), we have
\begin{align}\label{d=Delta}
d=\Delta(P,Q).
\end{align}
Similarly, we can consider the case of $u<0$.
The last statement of the theorem holds, as the condition of $\triangle PQR$ being large is equivalent to the existence of  $3$-orbits in the corresponding map on $S^1$.
\end{proof}

In the proof of Theorem \ref{t1}, we obtain the set containing the points $R$ with $m=1$.
As a result, we may express  Theorem \ref{t1} geometrically as follows.

\begin{thm}\label{t2}
Let $P, Q, R\in D$ be different from each other. 
Let $T_1, T_2\in {S^1}$ be two  points at infinity which intersect the hyperbolic line $PQ$.  
Let $C_1, C_2$ be two 
circular 
arcs in $D$ intersecting ${S^1}$ at $T_1,T_2$ with angle $2\arctan(e^{d(P,Q)})-\pi/2$.
Let $\Theta$ be the inner region bounded by $C_1, C_2, T_1$ and $T_2$.
The definition of $m\in {\mathbb Z}$ is the same as that of Theorem \ref{t1}. Then,
\begin{align*}
m=\begin{cases}0 & \text{if }R\in \Theta,\\
1 & \text{if } R\in C_1\cup C_2,\\
2 & \text{if } R\in (\mathrm{cl}\Theta)^{c}.
\end{cases}
\end{align*}
See Figure \ref{fig:Theorem2.3}.
\end{thm}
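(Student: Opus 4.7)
The plan is to transcribe the inequality analysis from the proof of Theorem \ref{t1} into the geometry of $D_P$ via the conformal isometry used there. First, I normalize as in that proof: pick $g\in SL(2,\mathbb{R})$ so that $(g\circ f)(P)=i$ and $(g\circ f)(Q)=ki$, where by (\ref{int_1}) one has $k=e^{d(P,Q)}>1$ (swapping $P,Q$ if needed). The hyperbolic line $PQ$ then becomes the imaginary axis in $H$, so its ideal endpoints are $T_1=(g\circ f)^{-1}(0)$ and $T_2=(g\circ f)^{-1}(\infty)$. For $R$, rather than reducing to $\mathrm{Re}((g\circ f)(R))>0$ by a further reflection, I keep the full discriminant $D=((k^2-1)u-2kv)((k^2-1)u+2kv)$ from the proof of Theorem \ref{t1}; then (\ref{m=begincases}) extends at once to say that, writing $u+vi=(g\circ f)(R)$ with $v>0$, the locus $m=1$ is the pair of Euclidean half-rays
\[
L=\Bigl\{v=\tfrac{k^2-1}{2k}u,\ u>0\Bigr\},\qquad L'=\Bigl\{v=-\tfrac{k^2-1}{2k}u,\ u<0\Bigr\}
\]
emanating from the origin, while $m=0$ corresponds to $v>\tfrac{k^2-1}{2k}|u|$ and $m=2$ to $v<\tfrac{k^2-1}{2k}|u|$.

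Next, I compute the Euclidean angle between $L$ and the real axis. It is $\arctan\!\bigl(\tfrac{k^2-1}{2k}\bigr)$, and the identity $\tan(2\arctan k-\pi/2)=-\cot(2\arctan k)=\tfrac{k^2-1}{2k}$ shows this equals $2\arctan k-\pi/2=2\arctan(e^{d(P,Q)})-\pi/2$; by the mirror symmetry of $L'$, the same holds for $L'$. I then transport this picture back to $D_P$. Since $(g\circ f)^{-1}$ is a Möbius transformation, it sends generalized circles in $\hat{\mathbb{C}}$ to generalized circles and is conformal. The rays $L,L'$ lie on generalized circles through $0$ and $\infty$, so their images $C_1,C_2\subset D_P$ are genuine circular arcs sharing the endpoints $T_1,T_2$, and conformality at $T_1$ transfers the computed Euclidean angle to the angle $2\arctan(e^{d(P,Q)})-\pi/2$ between each $C_i$ and $S^1$.

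Finally, the three cases of (\ref{m=begincases}) transcribe into the three cases of the theorem: the open ``V'' region $\bigl\{v>\tfrac{k^2-1}{2k}|u|\bigr\}\subset H$, bounded by $L\cup L'$, is carried by $(g\circ f)^{-1}$ homeomorphically onto the inner region $\Theta$ bounded by $C_1\cup C_2$; the boundary $L\cup L'$ goes to $C_1\cup C_2$; and the complementary region in $H$ goes to $(\mathrm{cl}\,\Theta)^c$. The one step I expect to be slightly delicate is verifying that the two rays $L$ and $L'$ close up in $D_P$ into a lens-shaped region with ``inside'' matching the $m=0$ region; this is handled by observing that the isometry $z\mapsto-\bar z$ of $H$ corresponds under $(g\circ f)^{-1}$ to the hyperbolic reflection of $D_P$ across the line $PQ$, which swaps the two halves of each region coherently and so pieces $\Theta$ together unambiguously.
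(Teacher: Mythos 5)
Your proof is correct and follows essentially the same route as the paper: it reuses the normalization and discriminant computation from the proof of Theorem \ref{t1}, identifies the locus $m=1$ with the two rays $L,L'$ through the origin in $H$, and transports the three regions back to $D_P$ via the conformal M\"obius map $(g\circ f)^{-1}$. Your angle computation is in fact slightly more careful than the paper's, which speaks of the angle between $L$ and the \emph{imaginary} axis, whereas the relevant quantity $2\arctan k-\pi/2$ is, as you verify, the angle $L$ makes with the \emph{real} axis (the image of $S^1$).
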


\begin{figure}
    \centering
\begin{tikzpicture}
\draw(0,0) circle (2);
\draw [](1,1.7320508) arc [start angle = 150, end angle = 210, radius = 1.7320508*2];
\draw (1,1.7320508)node[above]{$T_1$};
\draw (1,-1.7320508)node[below]{$T_2$};
\draw (2,0)node[right]{$T_3$};
\draw [](1,1.7320508) arc [start angle = 13.8978862480140, end angle = -13.8978862480140, radius = 7.21110255092798];
\draw [](1,1.7320508) arc [start angle = 106.102113751986, end angle = 253.897886248014, radius = 1.80277563773199];

\draw (0.280405516086274*2,0.207351901108928*2)node[left]{$P$};
\draw (0.280405516086274*2,-0.207351901108928*2)node[left]{$Q$};
\draw (1.9,0)node[left]{$C_1$};
\draw (-0.3,0)node[left]{$C_2$};

\coordinate (P) at (0.280405516086274*2,0.207351901108928*2);
\fill (P) circle [radius=1.5pt];
\coordinate (Q) at (0.280405516086274*2,-0.207351901108928*2);
\fill (Q) circle [radius=1.5pt];

 \end{tikzpicture}

    \caption{Two circular arcs $C_1$ and $C_2$.}
    \label{fig:Theorem2.3}
\end{figure}
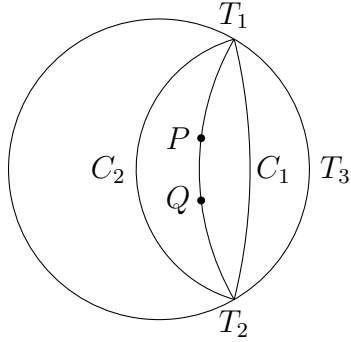

\begin{proof}
We recall the proof of Theorem \ref{t1}.
In the case of $u<0$, $L'$ is defined as 
$\{x+yi\in H\mid y=-\frac{k^2-1}{2k}x\}$.
We define $\Theta'$ as the inner region bounded by $L, L'$ and $0$, i.e.,
\begin{align*}
\Theta':=\{x+yi\in H\mid |y|>\left(\dfrac{k^2-1}{2k}\right)|x|\}.
\end{align*}
Then, we have
\begin{align}\label{g(f(R))}
m=\begin{cases}0 & \text{if }g(f(R))\in \Theta',\\
1 & \text{if } g(f(R))\in L\cup L',\\
2 & \text{if } g(f(R))\in (\mathrm{cl}\Theta')^{c}.
\end{cases}
\end{align}
At $0$ and $\infty$, the angle between $L$ and the imaginary axis 
is $2\arctan(e^{d(P,Q)})-\pi/2$.
The same is true for $L'$.
The theorem is derived from (\ref{g(f(R))}) and the preceding facts. 
\end{proof}
The arcs will  be  discussed  in Proposition \ref{p2}, as they are required  in the following chapter. 

\begin{prop}\label{p2}
Let $P, Q\in D$ be different from each other. 
Let $T_1, T_2\in {S^1}(=\mathbb{R}/\mathbb{Z})$ be two points at infinity, which intersect the hyperbolic line $PQ$ with
$T_1-T_2\in (0,1/2]\  mod\  1$.
Let $T_3\in S^1(={\mathbb R}/{\mathbb Z})$ be the point such that $T_1+T_2=2T_3\  mod\  1$ and $T_3-T_2\in (0,1/2]\  mod\  1$.
Let $C_1, C_2$ be two 
circular 
arcs in $D$ intersecting ${S^1}$ at $T_1,T_2$ with angle $2\arctan(e^{d(P,Q)})-\pi/2$ such that
$C_1$ is closer to $T_3$ than $C_2$. Then, the center of $C_1$ is 
\begin{align*}
\dfrac{e^{2d(P,Q)}-1}{u(e^{2d(P,Q)}-1)-2e^{d(P,Q)}\sqrt{1-u^2}}T_3,
\end{align*}
and the center of $C_2$ is
\begin{align*}
\dfrac{e^{2d(P,Q)}-1}{u(e^{2d(P,Q)}-1)+2e^{d(P,Q)}\sqrt{1-u^2}}T_3,
\end{align*}
where $u$ is the length between the origin and the straight line $T_1T_2$ in the sense of the Euclidean norm.
\end{prop}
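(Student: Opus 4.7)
The plan is to normalize the configuration by a rotation of $D_P$. Since such a rotation is an isometry preserving $S^1$, I may assume $T_3 = 1 \in S^1$, in which case $T_1 = u + i\sqrt{1-u^2}$ and $T_2 = \overline{T_1}$ are complex conjugates and the chord $T_1 T_2$ is vertical. The perpendicular bisector of this chord is then the real axis, so the centers of both $C_1$ and $C_2$ must take the form $c \in \mathbb{R}$, with radius $r = \sqrt{(c-u)^2 + 1 - u^2}$. Undoing the rotation at the end reintroduces the factor $T_3$ stated in the proposition.

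Next I encode the intersection-angle condition. At $T_1$, the angle between the circle centered at $c$ and $S^1$ equals, up to sign, the angle between the radial vectors $T_1$ and $T_1 - c$, so by the dot product and squaring,
\[
(1-uc)^2 \;=\; \cos^2\phi\,\bigl[(c-u)^2 + 1 - u^2\bigr], \qquad \phi := 2\arctan(e^{d(P,Q)}) - \tfrac{\pi}{2}.
\]
Since $\phi/2 + \pi/4 = \arctan(e^{d(P,Q)})$, a short calculation gives $\cos\phi = 1/\cosh d(P,Q)$. The displayed equation is a quadratic in $c$; passing to the variable $1/c$ reduces it to $(1/c - u)^2 = (1 - u^2)/\sinh^2 d(P,Q)$, hence
\[
\frac{1}{c} \;=\; u \pm \frac{\sqrt{1 - u^2}}{\sinh d(P,Q)}.
\]
Multiplying numerator and denominator by $2e^{d(P,Q)}$ and using $2e^d \sinh d = e^{2d} - 1$ puts the two candidate centers in the claimed form
\[
c_\pm \;=\; \frac{e^{2d(P,Q)} - 1}{u\bigl(e^{2d(P,Q)} - 1\bigr) \pm 2 e^{d(P,Q)} \sqrt{1-u^2}}.
\]

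The last step is to identify which sign corresponds to $C_1$. Each such circle meets the real axis at the two points $c \pm r$, whose product $c^2 - r^2$ equals $2uc - 1$; exactly one of these intercepts lies in $(-1, 1)$, and by the symmetry across the real axis it is the point through which the arc inside $D_P$ passes. Thus ``closer to $T_3 = 1$'' translates to ``larger interior real-axis intercept.'' A short case analysis based on the sign of $u \sinh d(P,Q) - \sqrt{1 - u^2}$ (which governs the sign of $c_-$), combined with the monotonicity of $c - r$ as a function of $c$, shows that the minus sign in $c_\pm$ always yields the larger interior intercept, so $c_-$ is the center of $C_1$. I expect this final disambiguation to be the main obstacle: $c_\pm$ are not uniformly positive or negative across all parameter values, so the identification cannot be reduced to a single blanket sign check and must instead be made via the larger-intercept comparison just described.
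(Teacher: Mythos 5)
Your derivation is correct and arrives at the paper's formulas, but by a different computational route. The paper's proof is a synthetic angle-chase: writing the center of $C_1$ as $aT_3$ with $a>0$, it introduces auxiliary feet $B,C$ on the line $OT_3$, observes $\angle OAT_1=2\arctan k-\pi/2-\theta$ with $\theta=\arccos u$, reads off $a=u+\sqrt{1-u^2}/\tan(2\arctan k-\pi/2-\theta)$ from a right triangle, and simplifies by a compound-angle identity, dispatching the case $a<0$ with ``the same formula.'' You instead encode the intersection angle via the dot product of the radial vectors, which gives the quadratic $(1-uc)^2=\cos^2\phi\,[(c-u)^2+1-u^2]$ with $\cos\phi=1/\cosh d(P,Q)$, and the substitution $c\mapsto 1/c$ produces both roots at once; I have checked that $(1/c-u)^2=(1-u^2)/\sinh^2 d(P,Q)$ and that $2e^{d}\sinh d=e^{2d}-1$ turns the roots into the stated expressions. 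What your route buys is that the two centers appear symmetrically, which forces the genuinely delicate labeling question into the open: your interior-intercept criterion (the two real-axis intercepts multiply to $c^2-r^2=2uc-1$, exactly one lies in $(-1,1)$, and the arc nearer $T_3$ is the one with the larger such intercept) is sound and does select the minus sign in every regime --- I verified it for $u=0$ and for $u\cosh d(P,Q)$ on either side of $1$ --- but you state rather than execute that case analysis, so it should be written out to make the proof complete. One caveat you share with the paper: at the exceptional parameter $u\cosh d(P,Q)=1$ one has $1/c_-=0$, i.e.\ $C_1$ degenerates to the straight chord $T_1T_2$ with no finite center, and the displayed formula (like the paper's proof, which only treats $a>0$ and $a<0$) tacitly excludes this case.
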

\begin{proof}
We put $k=e^{d(P,Q)}$.Let $A=aT_3$ be the center of $C_1$ where $a\in {\mathbb R}$.
See Figure \ref{fig:prpposition2.4}.
Let $\theta$ be $\arccos u$.
We assume that $a>0$.
Let $B$ be the intersection of line $OT_3$ and the line  vertical to  line $AT_1$
that passes through $T_1$.
Let $C$ be the intersection of line $OT_3$ and the line vertical to the line $OT_1$ that passes through $T_1$.
Since $\angle BT_1C=2\arctan(k)-\pi/2$ and $\angle T_1OA=\theta$, 
we have $\angle OAT_1=2\arctan k-\pi/2-\theta$.
Therefore, $AT_1=\sqrt{1-u^2}/\sin(2\arctan k-\pi/2-\theta)$,
which implies
\begin{align*}
a=u+\dfrac{\sqrt{1-u^2}}{\tan(2\arctan k-\pi/2-\theta)}\\
=\dfrac{k^2-1}{u(k^2-1)-2k\sqrt{1-u^2}}.
\end{align*}
For the case of $a<0$ we have the same formula.
We can prove the formula for $C_2$ in the same manner.
\begin{figure}
    \centering
\begin{tikzpicture}
\draw[->] (-0.5*1.5, 0)--(4*1.5,0);
\draw[->] (0, -0.5*1.5)--(0,2);
\draw (0, 0)--(1*1.5,1*1.5);
\draw (1*1.5,1*1.5)--(3*1.5,0);
\draw (0, 0)--(1*1.5,1*1.5);
\draw (1*1.5,1*1.5)--(1,0);
\draw (1*1.5,1*1.5)--(2*1.5,0);
\draw (1*1.5,1*1.5)--(1.5,0);

\draw [dashed](1*1.5,1*1.5)--(1*1.5+0.8*0.5,1*1.5+0.8*1.5);
\draw [dashed](1*1.5,1*1.5)--(1*1.5-0.8,1*1.5+0.8);

\draw (0,-0.3)node[left]{$O$};
\draw (1*1.5-0.1,1*1.5)node[above]{$T_1$};
\draw (1,0)node[below]{$B$};
\draw (2*1.5,0)node[below]{$C$};
\draw (3*1.5,0)node[below]{$A$};
\draw (1.7,0)node[below]{$uT_3$};

 \end{tikzpicture}

    \caption{Points $A$, $B$, $C$.}
    \label{fig:prpposition2.4}
\end{figure}
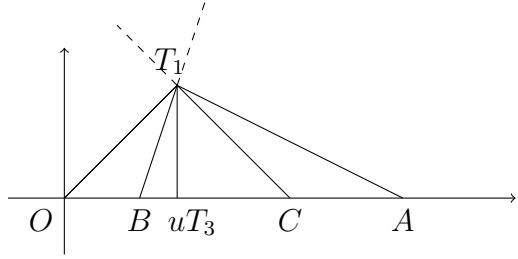
\end{proof}
Now, we use  the Beltrami-Klein disk model (for example, see \cite{B},\cite{L}).
Let $D$ be the  disk $\{(x,y)\in {\mathbb R}^2\mid x^2+y^2<1\}$.
The boundary $\partial D (= S^1)$ is regarded as points at infinity.
Then, the distance $d(P,Q)$ between $P$ and $Q$  is given by
\begin{align*}
\dfrac{1}{2}\left|\log\left(\dfrac{|v_1Q||v_2P|}{|v_1P||v_2Q|} \right)\right|,
\end{align*}
where $v_1, v_2\in {S^1}$ be two points at infinity which intersect the hyperbolic line $PQ$ (see Figure \ref{distance}).

\begin{figure}
    \centering
\begin{tikzpicture}
\draw(0,0) circle (2);
\draw (-1/2,1.732/2)--(1,0);
 \draw (-1/2,1.732/2)node[left]{$P$};
  \draw (1,0+0.1)node[right]{$Q$};

\draw[dashed] (-1/2,1.732/2)--(-1.42705098312484, 1.40125853844407);
\draw[dashed] (1,0)--(1.92705098312484,-0.535233134659635);
\draw (-1.42705098312484, 1.40125853844407)node[left]{$v_1$};
\draw (1.92705098312484,-0.535233134659635)node[right]{$v_2$};
\coordinate (P) at (-1/2,1.732/2);
\fill (P) circle [radius=1.5pt];
\coordinate (Q) at (1,0);
\fill (Q) circle [radius=1.5pt];

 \end{tikzpicture}

    \caption{$d(P,Q)$.}
    \label{distance}
\end{figure}

We define  $G:D\to D$ by for $(x,y)\in D$
\begin{align*}
G(x,y):=\left(\dfrac{2x}{1+x^2+y^2}, \dfrac{2y}{1+x^2+y^2} \right).
\end{align*}
Then, the inverse map of $G$ is given by for $(x,y)\in D$
\begin{align}\label{G-1}
G^{-1}(x,y)=\left(\dfrac{x}{1+\sqrt{1-x^2-y^2}},\dfrac{y}{1+\sqrt{1-x^2-y^2}}\right).
\end{align}
$G$ is naturally extended to the boundary of $D$. 
The Poincaré hyperbolic disk model
is isomorphic to the Beltrami–Klein model via the map $G$.
A line of $D$ in the Beltrami–Klein model is known to be a straight line segment with endpoints at the boundary of $D$.

We give an example.
\begin{example}\label{example1}
Let $P=(-\frac{1}{4}, \frac{\sqrt{3}}{4})$, $Q=(-\frac{1}{4}, -\frac{\sqrt{3}}{4})$,
$R=(\frac{1}{2}, 0)$, and $S=(-\frac{1}{4}, 0)$.
We set $v_1=(-\frac{1}{4}, \frac{\sqrt{15}}{4})$, 
$v_2=(-\frac{1}{4}, -\frac{\sqrt{15}}{4})$,
$v_3=(1,0)$,
and 
$v_4=(-1,0)$, which are points at infinity.
Then, $\triangle PQR$ is an equilateral triangle in $D$. See Figure
\ref{fig:example3-1}.
Then, 
\begin{align*}
&d(P,Q)=\dfrac{1}{2}\log\dfrac{|v_1Q||Pv_2|}{|v_2Q||Pv_1|}
=\log \dfrac{\sqrt{5}+1}{\sqrt{5}-1},\\
&\Delta(P,Q)=\log\dfrac{e^{d(P,Q)}+1}{e^{d(P,Q)}-1}=\log \sqrt{5}.
\end{align*}
Conversely, we have
\begin{align*}
\delta(P,Q,R)=d(R,S)=\dfrac{1}{2}\log\dfrac{|v_3S||Rv_4|}{|v_4S||Rv_3|}=\log \sqrt{5}.
\end{align*}
Therefore, $m$ (defined in Theorem \ref{t1}) is $1$.
Therefore, $\triangle PQR$ is not large.

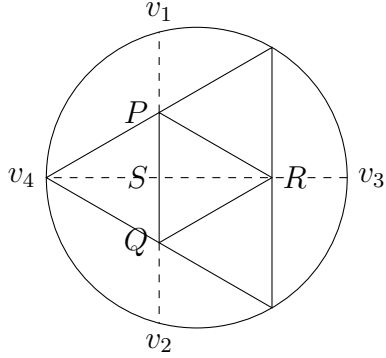
\begin{figure}
    \centering
\begin{tikzpicture}
\draw(0,0) circle (2);
\draw (-1/2,1.732/2)--(-1/2,-1.732/2)--(1,0)--cycle;
 \draw (-1/2,1.732/2)node[left]{$P$};
 \draw (-1/2,-1.732/2)node[left]{$Q$};
  \draw (1,0)node[right]{$R$};
 \draw (-1/2,0)node[left]{$S$};
 
\draw[dashed] (-1/2,1.732/2)--(-1/2,1.936491673103708);
\draw[dashed] (-1/2,-1.732/2)--(-1/2,-1.936491673103708);
\draw[dashed] (2, 0)--(-2,0);
 \draw (-1/2,1.936491673103708)node[above]{$v_1$};
\draw (-1/2,-1.936491673103708)node[below]{$v_2$};
\draw (-2, 0)node[left]{$v_4$};
\draw (2,0)node[right]{$v_3$};
\draw (1,1.732)--(1,-1.732)--(-2,0)--cycle;

 \end{tikzpicture}
    \caption{$\triangle PQR$ in Example\ref{example1}.}
    \label{fig:example3-1}
\end{figure}
\end{example}

\begin{thm}
\label{t4}
Let $\triangle PQR$ be a triangle in $D$ (Beltrami–Klein model).
Let $T_1, T_2\in {S^1}(=\mathbb{R}/\mathbb{Z})$ be two points at infinity, which intersect the hyperbolic line $PQ$ with
$T_1-T_2\in (0,1/2]\  mod\  1$.
Let $T_3, T_4\in S^1(={\mathbb R}/{\mathbb Z})$ be the points such that $T_1+T_2=2T_3\  mod\  1$, $T_3-T_2\in (0,1/2]\  mod\  1$, and $T_4=T_3+\frac{1}{4}\  mod\  1$.
Let $(x_1,x_2)'$ be the coordinates using the base $\{T_3, T_4\}$.
Let $u$ be the $x_1$ coordinate of $P$.
We define $a\in {\mathbb R}$ as
\begin{align*}
\dfrac{e^{2d(P,Q)}-1}{u(e^{2d(P,Q)}-1)-2e^{d(P,Q)}\sqrt{1-u^2}}.
\end{align*}
Let $E$ be the ellipse defined by
\begin{align*}
\{(x_1,x_2)'\mid \dfrac{(x_1-C)^2}{A^2}+\dfrac{x_2^2}{B^2}=1\},
\end{align*}
where
\begin{align*}
&A:=\dfrac{|au-1|\sqrt{a^2-2ua+1}}{(u^2+1)a^2-2au+1}\left(=\dfrac{2k(k^2+1)(1-u^2)}{(k^2-2ku+1)(k^2+2ku+1)}\right),\\
&B:=\dfrac{\sqrt{a^2-2ua+1}}{\sqrt{(u^2+1)a^2-2au+1}}\left(=\dfrac{(k^2+1)\sqrt{1-u^2}}{\sqrt{(k^2-2ku+1)(k^2+2ku+1)}} \right),\\
&C:=\dfrac{a^2u}{(u^2+1)a^2-2au+1}\left(=\dfrac{(k^2-1)^2u}{(k^2-2ku+1)(k^2+2ku+1)} \right),\\
\end{align*}
where $k:=e^{d(P,Q)}$. See Figure \ref{fig:Theorem3.2}.
Then, $E$ is included in $D\cup {S^1}$ and 
is tangent to ${S^1}$ at $T_1$ and $T_2$.
Let $\Phi$ be the inner region bounded by $E$.
The definition of $m\in {\mathbb Z}$ is the same as that of Theorem \ref{t1}. Then,
\begin{align*}
m=\begin{cases}0 & \text{if }R\in \Phi,\\
1 & \text{if } R\in E,\\
2 & \text{if } R\in (\mathrm{cl}\Phi)^{c}.
\end{cases}
\end{align*}
\end{thm}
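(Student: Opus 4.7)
My plan is to transport Theorem \ref{t2} through the isomorphism $G\colon D_P \to D$. Since $G$ is a homeomorphism extending to the identity on $S^1$, once I verify $G(C_1 \cup C_2) \cup \{T_1, T_2\} = E$, the partition of $D_P$ given by Theorem \ref{t2} pushes forward to the desired partition of $D$ by $E$, and the trichotomy for $m$ follows immediately.

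First I fix coordinates so that $\{T_3, T_4\}$ is the standard basis; then $T_1 = (u, \sqrt{1-u^2})$ and $T_2 = (u, -\sqrt{1-u^2})$. By Proposition \ref{p2}, in these coordinates the arc $C_1 \subset D_P$ lies on the Euclidean circle $(X-a)^2 + Y^2 = a^2 - 2au + 1$ with $a$ as in the statement. Substituting $(X,Y) = G^{-1}(x_1, x_2) = (x_1/(1+s), x_2/(1+s))$ where $s := \sqrt{1-x_1^2-x_2^2}$, and using $x_1^2 + x_2^2 = (1-s)(1+s)$, the circle equation collapses after cancelling one factor of $1+s$ to the linear relation $s(1-au) = a(u - x_1)$. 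Squaring and completing the square in $x_1$ then yields an equation of the form $\frac{(x_1-C)^2}{A^2} + \frac{x_2^2}{B^2} = 1$ with the first (non-parenthesised) forms of $A$, $B$, $C$ exactly as stated.

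To derive the second forms in $k = e^{d'(P,Q)}$ and simultaneously show that $G(C_2)$ lies on the same ellipse, I would substitute $a = (k^2-1)/D$ with $D := u(k^2-1) - 2k\sqrt{1-u^2}$ and establish the two identities
\begin{align*}
a^2 - 2ua + 1 = \frac{(1-u^2)(k^2+1)^2}{D^2}, \qquad (u^2+1)a^2 - 2au + 1 = \frac{(k^2-2ku+1)(k^2+2ku+1)}{D^2}.
\end{align*}
Both right-hand sides are invariant under the sign flip $2k\sqrt{1-u^2} \mapsto -2k\sqrt{1-u^2}$, i.e.\ under $a \mapsto a'$, so the center parameter of $C_2$ produces the same ellipse $E$. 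The sign constraint $s \ge 0$ forced by the squaring step shows that $G(C_1)$ and $G(C_2)$ fill the two arcs of $E$ cut off by $T_1, T_2$, and in particular $E \subset D \cup S^1$.

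Finally, tangency of $E$ to $S^1$ at $T_1$ (and at $T_2$ by symmetry) reduces to $(u-C)B^2 = uA^2$, equivalent to the identity $C/u = a^2/K = 1 - A^2/B^2$ with $K := (u^2+1)a^2 - 2au + 1$, which is a short direct check. Together with Theorem \ref{t2}, this completes the proof. The principal obstacle is the algebraic bookkeeping, namely producing the two factorisations above and matching them to the stated second forms of $A$, $B$, $C$ without sign errors; everything else is formal once $G(C_1 \cup C_2) \cup \{T_1, T_2\} = E$ has been established.
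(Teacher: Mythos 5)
Your proposal is correct and takes essentially the same route as the paper: both arguments push the two circular arcs of Theorem~\ref{t2} and Proposition~\ref{p2} through the Poincar\'e-to-Klein map $G$, verify by direct computation that the two circles land on the single stated ellipse (with the two arcs filling its two halves determined by the sign of $a(u-x_1)$), and check tangency at $T_1$, $T_2$. One minor slip: the right-hand sides of your two factorisation identities are not literally invariant under $a\mapsto a'$ (the denominator $D^2$ changes, since its cross term flips sign), but this is harmless because $D^2$ cancels when forming $A$, $B$, $C$ --- which is exactly what your explicit $k$-expressions demonstrate.
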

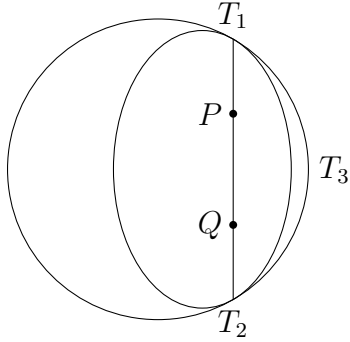
\begin{figure}
    \centering
    \begin{tikzpicture}
\draw(0,0) circle (2);
\draw (1,1.7320508)node[above]{$T_1$};
\draw (1,-1.7320508)node[below]{$T_2$};
\draw (2,0)node[right]{$T_3$};

\draw (0.5*2,0.369735774108543*2)node[left]{$P$};
\draw (0.5*2,-0.369735774108543*2)node[left]{$Q$};
\draw(0.295081967213115*2,0) circle (0.591073979584261*2 and 0.923287071496965*2);
\draw (1,1.7320508)--(1,-1.7320508);

\coordinate (P) at (0.5*2,0.369735774108543*2);
\fill (P) circle [radius=1.5pt];
\coordinate (Q) at (0.5*2,-0.369735774108543*2);
\fill (Q) circle [radius=1.5pt];

 \end{tikzpicture}

    \caption{Ellipse $E$.}
    \label{fig:Theorem3.2}
\end{figure}

\begin{proof}
For simplicity, we assume that $P=(u,p), Q=(u,q)$, where 
$0\leq u<1,\ p>q$.
First, we show that a circle with a center on the $x$-axis maps to an ellipse with a center on the $x$-axis by the map $G$.
We define circle $C(r,b)$ by $\{(x,y)\mid (x-b)^2+y^2=r^2\}$.
The map $G$ is extended to the map $\Bar{G}$ from ${\mathbb R}^2$ to $D\cup {S^1}$ using the same formula 
(\ref{G-1}).
Then, using simple calculations, $\Bar{G}$ bijectively maps  $C(r,b)$  to the ellipse $E(r,b)$ defined by
\begin{align*}
\dfrac{(x-C')^2}{A'^2}+\dfrac{y^2}{B'^2}=1,
\end{align*}
where 
\begin{align*}
&A':=\dfrac{2r(1+r^2-b^2)}{((r-b)^2+1)((r+b)^2+1)},\\
&B':=\dfrac{2r}{\sqrt{(r-b)^2+1}\sqrt{(r+b)^2+1}},\\
&C':=\dfrac{2b(1-r^2+b^2)}{((r-b)^2+1)((r+b)^2+1)}.
\end{align*}
For $V_1,V_2\in C(r,b)$, we denote $\mathrm{arc}(V_1V_2)$
by the arc of $C(r,b)$ that goes counterclockwise from $V_1$ to $V_2$. 
For $V_1,V_2\in E(r,b)$, we denote $\mathrm{arc}'(V_1V_2)$
by the arc of $E(r,b)$ that goes counterclockwise from $V_1$ to $V_2$.
We assume that $C(r,b)$ intersects ${S^1}$ at $T_1=(u,\sqrt{1-u^2}), T_2=(u,-\sqrt{1-u^2})$. 
Then, we have $r^2=1+b^2-2bu$, $C(r,0)=S^1$, and $C(r,\frac{1}{u})$ is a line in the hyperbolic sense.
We assume $
b \ne 0,\frac{1}{u}$. Then, we have
\begin{enumerate}
    \item[1)]  
    $\mathrm{arc}(T_2T_1)$ is mapped to $\mathrm{arc}'(T_2T_1)$ by $\Bar{G}$ if $b>\dfrac{1}{u}$ or $b<0$, and  $\mathrm{arc}(T_1T_2)$ is mapped to $\mathrm{arc}'(T_1T_2)$ by $\Bar{G}$ if $0<b<\dfrac{1}{u}$,
     where for the case of $u=0$ we set $\dfrac{1}{u}:=\infty$.
    \item[2)]
    Furthermore, we assume that $C(r',b')$ intersects ${S^1}$ at $T_1, T_2$, and $b'=\dfrac{b}{2bu-1}$.
    Then, $E(r,b)=E(r',b')$.
\end{enumerate}
The proofs of 1) and 2) are left to the reader.
Theorem \ref{t2}, Proposition \ref{p2}, 1), and 2) can be used to prove the theorem. 
In Proposition \ref{p2}, we set $G^{-1}(P)$ (resp., $G^{-1}(Q))$ as $P$ (resp., $Q$).
$C_1$ in Proposition \ref{p2}, is the arc of $C(\sqrt{(a-u)^2+1-u^2},a)$, and 
$C_2$ is the arc of $C(\sqrt{(a'-u)^2+1-u^2},a')$, where
$a':=a/(2au-1)$.
As a result of 1) and 2), we have completed the proof.
\end{proof}

In Theorem \ref{t4}
let $P=(u,p)'$ and $Q=(u,q)'$.
we have
\begin{align*}
d(P,Q)=\dfrac{1}{2}\left|\log\left(\dfrac{(p+\sqrt{1-u^2})(-q+\sqrt{1-u^2})}{(q+\sqrt{1-u^2})(-p+\sqrt{1-u^2})}\right)\dfrac{}{}\right|.
\end{align*}
Then, the ellipse $E$ in Theorem \ref{t4} is given by
\begin{prop}\label{p3}
\begin{align*}
\{(x_1,x_2)'\mid \dfrac{(x_1-C)^2}{A^2}+\dfrac{x_2^2}{B^2}=1\},
\end{align*}
where
\begin{align*}
&A:=\sqrt{\dfrac{(p^2+u^2-1)(q^2+u^2-1)(pq+u^2-1)^2}{(p^2(q^2+u^2)-2pq+(q^2-2)u^2+u^4+1)^2}},\\
&B:=\sqrt{\dfrac{(pq+u^2-1)^2}{p^2(q^2+u^2)-2pq+(q^2-2)u^2+u^4+1}},\\
&C:=\dfrac{u(p-q)^2}{p^2(q^2+u^2)-2pq+(q^2-2)u^2+u^4+1}.
\end{align*}
\end{prop}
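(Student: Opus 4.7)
The ellipse $E$ has already been identified in Theorem \ref{t4} with explicit formulas for $A$, $B$, $C$ in terms of $u$ and $k:=e^{d'(P,Q)}$, so to derive Proposition \ref{p3} it suffices to substitute the displayed formula for $d'(P,Q)$ (just above the proposition) into those expressions and simplify.

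Set $s:=\sqrt{1-u^2}$ and assume $p>q$ without loss of generality (the formulas in the proposition are symmetric in $p$ and $q$ through squares, so this costs nothing). The distance formula yields
\begin{equation*}
k^2 \;=\; \frac{N}{M}, \qquad N:=(p+s)(s-q),\quad M:=(q+s)(s-p).
\end{equation*}
A direct computation with this representation gives the three key identities
\begin{equation*}
k^2-1 \;=\; \frac{2(p-q)s}{M}, \qquad k^2+1 \;=\; \frac{2(1-u^2-pq)}{M}, \qquad NM \;=\; (p^2+u^2-1)(q^2+u^2-1),
\end{equation*}
the last coming from $NM=(s^2-pq)^2-(p-q)^2s^2$ and $s^2=1-u^2$. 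Combining the first two,
\begin{equation*}
(k^2+1)^2 - 4k^2u^2 \;=\; \frac{4(1-u^2)\,\Pi}{M^2}, \qquad \Pi \;:=\; (1-u^2-pq)^2 + u^2(p-q)^2,
\end{equation*}
and expanding $\Pi$ shows it equals the polynomial $p^2(q^2+u^2)-2pq+(q^2-2)u^2+u^4+1$ appearing in the proposition.

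Inserting these four identities into the Theorem \ref{t4} expressions
\begin{equation*}
A=\frac{2k(k^2+1)(1-u^2)}{(k^2+1)^2-4k^2u^2}, \quad B^2=\frac{(k^2+1)^2(1-u^2)}{(k^2+1)^2-4k^2u^2}, \quad C=\frac{(k^2-1)^2 u}{(k^2+1)^2-4k^2u^2},
\end{equation*}
the factors of $M^2$, $(1-u^2)$, and $(p-q)^2$ cancel cleanly. For $A$ one squares first, uses $k^2=N/M$, and obtains $A^2 = NM\,(pq+u^2-1)^2/\Pi^2$, at which point the identity $NM=(p^2+u^2-1)(q^2+u^2-1)$ gives the stated form. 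The analogous substitutions for $B^2$ and $C$ are even shorter. The main obstacle is purely algebraic bookkeeping: the crux is recognizing (and verifying by expansion) the two non-obvious identities $NM=(p^2+u^2-1)(q^2+u^2-1)$ and $\Pi=(1-u^2-pq)^2+u^2(p-q)^2$. Once these are in hand, everything collapses; the clean form of $NM$ in particular is what makes the square root defining $A$ factor into two linear pieces rather than remaining an unwieldy rational expression.
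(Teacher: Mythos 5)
Your proposal is correct and follows exactly the route the paper intends: the paper states Proposition \ref{p3} with no written proof, presenting it as the result of substituting the displayed formula for $d'(P,Q)$ into the $k$-expressions of Theorem \ref{t4}, which is precisely what you carry out (and your key identities $NM=(p^2+u^2-1)(q^2+u^2-1)$ and $(k^2+1)^2-4k^2u^2=4(1-u^2)\Pi/M^2$ with $\Pi$ equal to the stated denominator all check out). You have simply supplied the algebraic details the authors omitted; the only genuinely different derivation in the paper is the envelope computation sketched in the Remark, which you were not obliged to follow.
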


\begin{rem}
Proposition \ref{p3} can also be demonstrated without the use of hyperbolic geometry.
Given two points $P=(u,p), Q=(u,q)$ in $D$,
let us characterize the region $\mathcal{R}_{P,Q} \subset D$ such that for all $R\in \mathcal{R}_{P,Q}$ there exists a triangle $\triangle u_1u_2u_3$ inscribed in ${S^1}$ and circumscribing $\triangle PQR$.
If $\triangle u_1u_2u_3$ exists, we can infer that $u_1=(\cos\theta, \sin\theta)$ and $\overline{u_1u_2}, \overline{u_1u_3}$ include $P, Q$.
Then, $R$ lies on the open segment $\sigma_\theta:=\overline{u_2u_3}$, and, for any point $R' \in \sigma_\theta$, $\triangle u_1u_2u_3$ circumscribes $\triangle PQR'$.
Thus, $\sigma_\theta \subset \mathcal{R}_{P,Q}$ and therefore, $\mathcal{R}_{P,Q}=\bigcup_{\theta\in [0,2\pi]}\sigma_\theta$.
As $\mathcal{R}_{P,Q}$ is a union of segments, its boundary component in $D$ is the envelope of the family of those segments (see Figure \ref{fig:Remark3.1}).

\begin{figure}
    \centering
\begin{tikzpicture}[x= 1cm ,y= 1cm ]%
\clip (-2.5,-2.5) rectangle (2.75,2.75);%
\draw [line width=0.6](2.00000,0.00000)--(1.98423,0.25067)--(1.93717,0.49738)--(1.85955,0.73625)--(1.75261,0.96351)--(1.61803,1.17557)--(1.45794,1.36909)--(1.27485,1.54103)--(1.07165,1.68866)--(0.85156,1.80965)--(0.61803,1.90211)--(0.37476,1.96457)--(0.12558,1.99605)--(-0.12558,1.99605)--(-0.37476,1.96457)--(-0.61803,1.90211)--(-0.85156,1.80965)--(-1.07165,1.68866)--(-1.27485,1.54103)--(-1.45794,1.36909)--(-1.61803,1.17557)--(-1.75261,0.96351)--(-1.85955,0.73625)--(-1.93717,0.49738)--(-1.98423,0.25067)--(-2.00000,-0.00000)--(-1.98423,-0.25067)--(-1.93717,-0.49738)--(-1.85955,-0.73625)--(-1.75261,-0.96351)--(-1.61803,-1.17557)--(-1.45794,-1.36909)--(-1.27485,-1.54103)--(-1.07165,-1.68866)--(-0.85156,-1.80965)--(-0.61803,-1.90211)--(-0.37476,-1.96457)--(-0.12558,-1.99605)--(0.12558,-1.99605)--(0.37476,-1.96457)--(0.61803,-1.90211)--(0.85156,-1.80965)--(1.07165,-1.68866)--(1.27485,-1.54103)--(1.45794,-1.36909)--(1.61803,-1.17557)--(1.75261,-0.96351)--(1.85955,-0.73625)--(1.93717,-0.49738)--(1.98423,-0.25067)--(2.00000,-0.00000);%
\draw (0.5000000,0.4200000)node[above]{$P$};
\draw (0.5000000,-1.430000)node[below]{$Q$};
\draw (-1.400000,-0.15000000)node[below]{$R$};
\draw (1.8800000,0.4000000)node[right]{$u_1$};
\draw (-1.8400000,0.7200000)node[left]{$u_2$};
\draw (0.1200000,-1.950000)node[below]{$u_3$};

\draw [line width=0.3](1.96013,0.39734)--(-1.88525,0.66771)--(0.11789,-1.99652)--(1.96013,0.39734);%
\draw [line width=0.6](0.50000,0.50000)--(0.50000,-1.50000)--(-1.21754,-0.22037)--(0.50000,0.50000);%
\draw [line width=0.006](-1.60000,1.20000)--(0.00000,-2.00000);%
\draw [line width=0.006](-1.74231,0.98202)--(0.05649,-1.99920);%
\draw [line width=0.006](-1.86204,0.72993)--(0.10707,-1.99713);%
\draw [line width=0.006](-1.94987,0.44497)--(0.15276,-1.99416);%
\draw [line width=0.006](-1.99568,0.13132)--(0.19436,-1.99053);%
\draw [line width=0.006](-1.98964,-0.20327)--(0.23253,-1.98644);%
\draw [line width=0.006](-1.92368,-0.54723)--(0.26778,-1.98199);%
\draw [line width=0.006](-1.79321,-0.88567)--(0.30056,-1.97729);%
\draw [line width=0.006](-1.59865,-1.20179)--(0.33121,-1.97238);%
\draw [line width=0.006](-1.34624,-1.47907)--(0.36005,-1.96732);%
\draw [line width=0.006](-1.04759,-1.70369)--(0.38732,-1.96214);%
\draw [line width=0.006](-0.71816,-1.86661)--(0.41325,-1.95684);%
\draw [line width=0.006](-0.37494,-1.96454)--(0.43802,-1.95145);%
\draw [line width=0.006](-0.03399,-1.99971)--(0.46180,-1.94596);%
\draw [line width=0.006](0.29134,-1.97867)--(0.48472,-1.94037);%
\draw [line width=0.006](0.59137,-1.91057)--(0.50692,-1.93469);%
\draw [line width=0.006](0.86022,-1.80555)--(0.52852,-1.92890);%
\draw [line width=0.006](1.09526,-1.67344)--(0.54961,-1.92300);%
\draw [line width=0.006](1.29640,-1.52294)--(0.57030,-1.91697);%
\draw [line width=0.006](1.46521,-1.36130)--(0.59066,-1.91079);%
\draw [line width=0.006](1.60432,-1.19422)--(0.61079,-1.90445);%
\draw [line width=0.006](1.71680,-1.02597)--(0.63077,-1.89793);%
\draw [line width=0.006](1.80583,-0.85965)--(0.65067,-1.89120);%
\draw [line width=0.006](1.87448,-0.69737)--(0.67057,-1.88423);%
\draw [line width=0.006](1.92557,-0.54052)--(0.69055,-1.87700);%
\draw [line width=0.006](1.96162,-0.38992)--(0.71068,-1.86947);%
\draw [line width=0.006](1.98482,-0.24598)--(0.73105,-1.86160);%
\draw [line width=0.006](1.99704,-0.10883)--(0.75174,-1.85335);%
\draw [line width=0.006](1.99988,0.02160)--(0.77283,-1.84465);%
\draw [line width=0.006](1.99470,0.14549)--(0.79443,-1.83545);%
\draw [line width=0.006](1.98261,0.26313)--(0.81663,-1.82568);%
\draw [line width=0.006](1.96456,0.37484)--(0.83955,-1.81526);%
\draw [line width=0.006](1.94131,0.48096)--(0.86331,-1.80408);%
\draw [line width=0.006](1.91349,0.58184)--(0.88806,-1.79202);%
\draw [line width=0.006](1.88164,0.67782)--(0.91395,-1.77896);%
\draw [line width=0.006](1.84615,0.76923)--(0.94118,-1.76471);%
\draw [line width=0.006](1.80738,0.85638)--(0.96994,-1.74906);%
\draw [line width=0.006](1.76557,0.93955)--(1.00048,-1.73178);%
\draw [line width=0.006](1.72094,1.01900)--(1.03309,-1.71252);%
\draw [line width=0.006](1.67362,1.09498)--(1.06810,-1.69091);%
\draw [line width=0.006](1.62373,1.16770)--(1.10592,-1.66642);%
\draw [line width=0.006](1.57131,1.23734)--(1.14702,-1.63840);%
\draw [line width=0.006](1.51639,1.30406)--(1.19197,-1.60599);%
\draw [line width=0.006](1.45896,1.36801)--(1.24146,-1.56805);%
\draw [line width=0.006](1.39898,1.42929)--(1.29631,-1.52302);%
\draw [line width=0.006](1.33637,1.48799)--(1.35749,-1.46875);%
\draw [line width=0.006](1.27105,1.54416)--(1.42612,-1.40220);%
\draw [line width=0.006](1.20289,1.59783)--(1.50344,-1.31897);%
\draw [line width=0.006](1.13173,1.64899)--(1.59056,-1.21248);%
\draw [line width=0.006](1.05741,1.69761)--(1.68797,-1.07273);%
\draw [line width=0.006](0.97972,1.74360)--(1.79402,-0.88403);%
\draw [line width=0.006](0.89844,1.78684)--(1.90100,-0.62144);%
\draw [line width=0.006](0.81332,1.82716)--(1.98479,-0.24620);%
\draw [line width=0.006](0.72408,1.86433)--(1.97823,0.29428);%
\draw [line width=0.006](0.63042,1.89804)--(1.71504,1.02890);%
\draw [line width=0.006](0.53203,1.92794)--(0.90137,1.78537);%
\draw [line width=0.006](0.42858,1.95354)--(-0.50811,1.93438);%
\draw [line width=0.006](0.31971,1.97428)--(-1.68044,1.08448);%
\draw [line width=0.006](0.20507,1.98946)--(-1.99956,-0.04186);%
\draw [line width=0.006](0.08434,1.99822)--(-1.81005,-0.85071);%
\draw [line width=0.006](-0.04279,1.99954)--(-1.49361,-1.33009);%
\draw [line width=0.006](-0.17656,1.99219)--(-1.19399,-1.60449);%
\draw [line width=0.006](-0.31710,1.97470)--(-0.94253,-1.76398);%
\draw [line width=0.006](-0.46441,1.94533)--(-0.73744,-1.85908);%
\draw [line width=0.006](-0.61821,1.90206)--(-0.57014,-1.91701);%
\draw [line width=0.006](-0.77790,1.84252)--(-0.43228,-1.95272);%
\draw [line width=0.006](-0.94241,1.76405)--(-0.31718,-1.97469);%
\draw [line width=0.006](-1.11002,1.66369)--(-0.21976,-1.98789);%
\draw [line width=0.006](-1.27814,1.53829)--(-0.13621,-1.99536);%
\draw [line width=0.006](-1.44313,1.38470)--(-0.06368,-1.99899);%
\draw [line width=0.006](-1.60000,1.20000)--(0.00000,-2.00000);%
\end{tikzpicture}%
    \caption{Ellipse as envelope.}
    \label{fig:Remark3.1}
\end{figure}
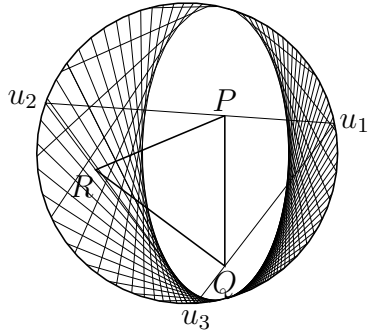

Let us now directly deduce the envelope equation.
The segment $\sigma_\theta$ is contained within the line described by the following equation:
\begin{align*}
    F(x,y,\theta)&:=
    \left( (u^2+1-p q) \cos\theta+(p+q) u \sin\theta-2 u\right)x\\
   &\hspace{7mm}+ \left((p+q) u \cos\theta+(1+p q-u^2)\sin\theta-p-q\right)y\\
   &\hspace{7mm}-2 u \cos\theta-(p+q) \sin\theta+
   p q+u^2+1\\
   &=0
\end{align*}
By eliminating the variable $\theta$ from $F(x,y,\theta)=0$ and $\frac{\partial }{\partial\theta}F(x,y,\theta)=0$, we obtain the same equation of ellipse as in Proposition \ref{p3}.
\end{rem}

\section{Large triangle}
In this section, for large triangles, their sizes are measured from the perspective of standard Euclidean geometry, and their characteristics are investigated.
In this chapter, basic properties of the rotation number are employed. From Lemma \ref{fund-3}(3), it is worth noting that as the triangle $\triangle PQR$ is continuously moved, the rotation number $\rho(\triangle PQR)$ also undergoes continuous changes.
The following lemma is also useful.

\begin{lem}[Lemma 1 \cite{DT}]
 Let $C_1\subset C_2$ be two convex polygons in $D$. Then $\rho(C_1) \geq \rho(C_2)$.   
\end{lem}

\begin{thm}\label{t7}
Let $\triangle PQR$ be a triangle in $D$.
Then, there exists a triangle $\triangle P'Q'R'$  in $D$
such that $\triangle P'Q'R'$ is  similar to $\triangle PQR$ and
for arbitrary triangle $\triangle ABC\subset D$ which is congruent to $\triangle P'Q'R'$, 
$\triangle ABC$ is large.
Here, similarity and congruence are in the sense of Euclidean geometry.
\end{thm}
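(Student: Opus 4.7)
The plan is to select $\triangle P'Q'R'$ as a sufficiently large Euclidean rescaling of $\triangle PQR$, so that every Euclidean-congruent copy of $\triangle P'Q'R'$ lying inside $D$ is forced to sit close to a maximally inscribed configuration in $\bar D$; near such configurations the hyperbolic quantities degenerate in a way that favours the rotation-number condition, and Theorem \ref{t5} (via Proposition \ref{p3pre}) will then yield $\rho = \tfrac{1}{3}$. Concretely, for each scale $s>0$ I let $\triangle_s$ denote a fixed similar triangle of ratio $s$ (scaled around the centroid of $\triangle PQR$), set $r_0$ to be the radius of the smallest Euclidean enclosing disk of $\triangle PQR$, and put $s^* := 1/r_0$, the maximum scale for which $\triangle_s$ admits a congruent copy inside $\bar D$. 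The set $K_s := \{\phi \in SE(2) : \phi(\triangle_s) \subset \bar D\}$ is compact (rotational part in $SO(2)$, translational part bounded), the family is nested with $K_{s'}\subset K_s$ for $s\le s'$, and $K_{s^*}$ consists precisely of those motions $\phi$ for which $\phi(\triangle_{s^*})$ has at least one vertex on $\partial D$.

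The key technical step is a blow-up lemma: if $\triangle_n = \phi_n(\triangle_{s_n}) \subset D$ with $s_n \nearrow s^*$ and $\phi_n \to \phi_\infty \in K_{s^*}$ in $SE(2)$, then $\omega(\triangle_n) \to +\infty$. Indeed, suppose some vertex $A_n$ of $\triangle_n$ tends to $A^* \in \partial D$; the other vertices $B_n,C_n$ stay at the same positive Euclidean distance from $A_n$ (shape is fixed), so they cannot accumulate at $A^*$. Hence $d'(A_n,B_n), d'(A_n,C_n)\to\infty$ and $\Delta'(A_n,B_n), \Delta'(A_n,C_n)\to 0$, while $\delta(B_n,C_n,A_n)=d'(A_n,\mathrm{line}\,B_nC_n)\to\infty$, because the limiting hyperbolic line through $B^*,C^*$ has ideal endpoints disjoint from $A^*$ and its points therefore remain at infinite hyperbolic distance from $A_n$ as $A_n\to A^*$. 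The same asymptotic analysis when two or three vertices approach $\partial D$ simultaneously (their boundary limits are forced to be distinct because Euclidean side lengths are preserved) shows that the numerator of $\omega$ in Proposition \ref{p3pre} grows without bound while the denominator tends to $0$, so in every case $\omega(\triangle_n)\to+\infty$.

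With the lemma in hand, compactness closes the proof. If no $s<s^*$ worked, one could pick sequences $s_n\nearrow s^*$ and $\phi_n\in K_{s_n}$ with $\phi_n(\triangle_{s_n})\subset D$ and $\omega(\phi_n(\triangle_{s_n}))<1$. Since $\phi_n$ eventually lies in the compact set $K_{s_1}$ for any fixed $s_1<s^*$, a subsequence converges to some $\phi_\infty\in \bigcap_{s<s^*}K_s = K_{s^*}$, and the lemma forces $\omega(\phi_n(\triangle_{s_n}))\to+\infty$, contradicting $\omega<1$. Hence some $s_1<s^*$ has the property that every Euclidean congruent copy of $\triangle_{s_1}$ in $D$ satisfies $\omega\ge 1$; setting $\triangle P'Q'R':=\triangle_{s_1}$ (placed in $D$) and invoking Theorem \ref{t5} gives $\rho(\psi_{\triangle ABC})=\tfrac{1}{3}$ for every such copy. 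The main obstacle I anticipate is making the blow-up lemma rigorous when two or three vertices approach $\partial D$ simultaneously, where both numerator and denominator of $\omega$ degenerate and the asymptotics of $\delta$ against $\Delta'$ must be compared with care.
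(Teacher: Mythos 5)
Your argument is correct, but it reaches the conclusion by a genuinely different route from the paper's. The paper's proof is explicit and quantitative: it normalizes $\triangle PQR$ so that its longest side spans a diameter of $D$ (or, for an acute triangle, so that it is inscribed in ${S^1}$), shrinks by a concrete factor $1-\epsilon$, bounds the set of admissible translations, deduces a lower bound on $k'=e^{d'(V_1,V_2)}$ for every placement, and then uses the explicit semi-axis $A$ of the ellipse in Theorem~\ref{t4} to check that the third vertex lies outside the ellipse, so that Theorem~\ref{t5} applies. You replace all of this by a soft compactness argument on the placement sets $K_s\subset SE(2)$ together with the blow-up lemma $\omega\to+\infty$; this treats the obtuse, right, and acute cases uniformly (the paper handles them separately and leaves the acute case to the reader), but it produces no effective scale $s_1$, whereas the paper's $\epsilon$ is computable from the shape of the triangle. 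Concerning the obstacle you flag at the end: the simultaneous degeneration is not actually delicate. Because the Euclidean side lengths are bounded below, the boundary limits of the vertices are distinct and non-collinear, so no vertex limit is an ideal endpoint of the limiting chord through the other two; hence each $\delta$-term whose distinguished vertex reaches $\partial D$ tends to $+\infty$, each one whose distinguished vertex stays interior is bounded below by a positive constant, and each $\Delta'$-term tends to $0$ (at least two vertices must reach $\partial D$ in the limit, since the limiting triangle's smallest enclosing disk is forced to coincide with the closed unit disk, and that disk touches at least two vertices). So the numerator of $\omega$ blows up while the denominator stays bounded (indeed tends to $0$), there is no genuine $0/0$ competition, and $\omega\to+\infty$ in every case. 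Alternatively, you could bypass $\omega$ and Proposition~\ref{p3pre} entirely by applying Theorem~\ref{t5} to the longest side $AB$ alone: in the limit $A,B\in\partial D$, so $\Delta'(A,B)\to 0$ while $\delta(A,B,C)$ stays bounded away from $0$, which already yields the desired contradiction.
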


\begin{proof}
First, we assume that
$\triangle PQR$ is an obtuse triangle or right triangle.
We may assume $\angle R\geq \pi/2$.
We set $P_1=(0,1)$ and $P_2=(0,-1)$.
It is not difficult to find $P_3=(x_1,x_2)\in D\cup {S^1}$
such that $\triangle P_1P_2P_3$ is similar to $\triangle PQR$.
We assume $x_1>0$ without loss of generality.
We take a look at similarity transformation $\mathcal{S}_{\lambda}$ for $\lambda\in {\mathbb R}$ defined as
for $X\in {\mathbb R}^2$,
\begin{align*}
\mathcal{S}_{\lambda}(X):=\lambda X.
\end{align*}
We take $K>0$ such that for any $k\geq K$
\begin{align}\label{frac4k}
\frac{4k(k^2+1)}{(k-1)^4}<\frac{x_1}{4}.
\end{align}
We take
\begin{align}\label{epsilon=min}
\epsilon=\min \{\frac{x_1^2}{32}, \frac{1}{4K}\}.
\end{align}
Let $\lambda=1-\epsilon$.
Let $P_i'=\mathcal{S}_{\lambda}(P_i')$ for $i=1,2,3$.
Let us demonstrate that $\triangle P_1'P_2'P_3'$ is as specified in the theorem.
We note that when considering a congruent transformation to $\triangle P_1'P_2'P_3'$ in $D$, we just need to consider translations, not rotations or 
reflections, 
by symmetry.
For $\tau_1,\tau_2\in {\mathbb R}^2$, we define the translation 
$\mathcal{P}_{\tau_1,\tau_2}(x,y):=(x+\tau_1,y+\tau_2)$
for $(x,y)\in {\mathbb R}^2$.
We define the set $U\subset D$ as
\begin{align*}
U:=\{(\tau_1,\tau_2)\in {\mathbb R}^2\mid \mathcal{P}_{\tau_1,\tau_2}(\triangle P_1'P_2'P_3')\subset D\}.
\end{align*}
Then, we see
\begin{align}\label{Usubset}
U\subset \{(\tau_1,\tau_2)\in {\mathbb R}^2\mid \mathcal{P}_{\tau_1,\tau_2}(P_1'P_2')\subset D\}
\subset (-\sqrt{2\epsilon},\sqrt{2\epsilon})\times (-\epsilon,\epsilon).
\end{align}
Let $(\tau_1',\tau_2')\in U$.
We set $V_i:=\mathcal{P}_{\tau_1',\tau_2'}(P_i)$ for $i=1,2,3$.
From the facts that $|\tau_2'|<\epsilon$, $(\tau_1',\tau_2')\in U$, and (\ref{epsilon=min}), we get
\begin{align*}
&d(V_1,V_2)\\
&=\dfrac{1}{2}\log\left(\dfrac{(1-\epsilon+\tau_2'+\sqrt{1-(\tau_1')^2})(1-\epsilon-\tau_2'+\sqrt{1-(\tau_1')^2})}{(-1+\epsilon-\tau_2'+\sqrt{1-(\tau_1')^2})(-1+\epsilon+\tau_2'+\sqrt{1-(\tau_1')^2})}\right)\\
&> 
\dfrac{1}{2}\log\dfrac{(1-2\epsilon)^2}{(2\epsilon)^2}>\log\dfrac{1}{4\epsilon}
.
\end{align*}
We set $k':=e^{d(V_1,V_2)}$.
Then, we have
\begin{align}\label{k'>}
k'>\dfrac{1}{4\epsilon}.
\end{align}
Let $E$ be the ellipse defined for $V_1,V_2$ in Theorem \ref{t4}; i.e., 
if $A,B$, and $C$ are as defined in Theorem  \ref{t4}, we have
\begin{align*}
E=\{(x,y)\mid \dfrac{(x-C)^2}{A^2}+\dfrac{y^2}{B^2}=1\}.
\end{align*}
In particular, $A$ is as follows:
\begin{align}\label{A:=}
A:=\dfrac{2k'(k'^2+1)(1-\tau_1'^2)}{(k'^2-2k'\tau_1'+1)(k'^2+2k'\tau_1'+1)}.
\end{align}
 Consider $\Phi$ to be the inner region enclosed by $E$.
Since $(\tau_1',0)$ is in $\Phi$, 
$\Phi$ is included in
the set $\{(x,y)\mid |x-\tau_1'|<2A\}$ denoted by $\Phi'$.
Let us demonstrate $V_3\notin\Phi'$.
From (\ref{frac4k}), (\ref{epsilon=min}), (\ref{k'>}), and (\ref{A:=}) we have
\begin{align}\label{2A<frac}
2A<\frac{4k'(k'^2+1)}{(k'-1)^4}<\frac{x_1}{4}.
\end{align}
We recall $V_3=\mathcal{P}_{\tau_1',\tau_2'}(P_3')=((1-\epsilon)x_1+\tau_1',(1-\epsilon)x_2+\tau_2')$.
From (\ref{epsilon=min}) and (\ref{2A<frac}), we have
\begin{align*}
(1-\epsilon)x_1>\dfrac{31}{32}x_1>2A,
\end{align*}
which implies $V_3\notin\Phi'$.
Therefore, from Theorem \ref{t4}, we observe that 
$\triangle V_1V_2V_3$ is large.
Next, we assume that
$\triangle PQR$ is an acute triangle.
Then, there is $\triangle P_1P_2P_3$, which
is inscribed in ${S^1}$ and is similar to $\triangle PQR$.
We can assume that $P_1P_2$ is parallel to the $y$-axis.
We can provide comparable proof in this case as  we did in the previous ones.
We leave the proof to the reader.
\end{proof}
If $\triangle PQR$ is similar to $\triangle P'Q'R'$,
we denote by $\triangle PQR\sim \triangle P'Q'R'$.
For a triangle $\triangle PQR$ in ${\mathbb R}^2$, we define $\kappa(\triangle PQR)$ as
its circumradius if $\triangle PQR$  is an acute triangle, and
$\kappa(\triangle PQR)$ as half of the length of the largest side of $\triangle PQR$
if $\triangle PQR$  is an obtuse triangle or right triangle.

For a triangle $\triangle PQR$ in ${\mathbb R}^2$, we define $\mu(\triangle PQR)$ as 
the infimum of $\kappa(\triangle P'Q'R')$ such that
\begin{enumerate}
\item[(1)] $\triangle P'Q'R'\subset D$,
\item[(2)] $\triangle P'Q'R'\sim \triangle PQR$,
\item[(3)] for any $\triangle P''Q''R''\subset D$ which is congruent to $\triangle P'Q'R'$,
$\triangle P''Q''R''$ is large.
\end{enumerate}
$\mu(\triangle PQR)$ is well defined from Theorem \ref{t7}.

\begin{rem}
 If $\triangle P_1P_2P_3\sim \triangle Q_1Q_2Q_3$, then it is assumed that $P_i$ corresponds to $Q_i$ for each $i=1,2,3$.   
\end{rem}

\begin{lem}\label{largelarge}
 Suppose that $\triangle PQR$ be included in $D$ and 
   for any $\triangle P_1Q_1R_1\subset D$ which is congruent to $\triangle PQR$,
$\triangle P_1Q_1R_1$ is large.
If $\triangle P_2Q_2R_2$ is included in $D$, $\triangle P_2Q_2R_2\sim \triangle PQR$, 
 and $\kappa(\triangle PQR)<\kappa(\triangle P_2Q_2R_2)$, then
 $\triangle P_2Q_2R_2$ is large.
\end{lem}
\begin{proof}
    We assume that $\triangle P_2Q_2R_2$ is included in $D$, $\triangle P_2Q_2R_2\sim \triangle PQR$, 
 and $\kappa(\triangle PQR)<\kappa(\triangle P_2Q_2R_2)$. We put $s=\kappa(\triangle PQR)/\kappa(\triangle P_2Q_2R_2)$.
With the center at $P_2Q_2$ serving as the focal point, we scale $\triangle P_2Q_2R_2$ by a factor of $s$, resulting in the transformation of the triangle into $\triangle P_3Q_3R_3$. Then, $\triangle P_3Q_3R_3$ is congruent to $\triangle PQR$ and it is included in $\triangle P_2Q_2R_2$.
We consider the ellipses $E_2$ and $E_3$ associated with points $P_2$ and $Q_2$, and $P_3$ and $Q_3$, respectively, as defined in Theorem \ref{t4}.
Since it holds that $d(P_2,Q_2)>d(P_3,Q_3)$, we have $\Delta(P_2,Q_2)<\Delta(P_3,Q_3)$.
Therefore, $E_2$ is included in $E_3$.
Since $\triangle P_3Q_3R_3$ is large, $R_3$ lies outside $E_3$.
We assume that $\triangle P_2Q_2R_2$ is not large.
Then, $R_2$ lies inside or on the boundary of the ellipse $E_2$.
Therefore, the interior of $\triangle P_2Q_2R_2$ is contained within the interior of $E_2$.
Hence, we see that $R_3$ lies inside $E_3$, which is a contradiction. 
\end{proof}


We define
\begin{align}\label{triangletriangle}
\triangle :=\{\triangle PQR \subset D \mid \delta(P,Q,R)=\Delta(P,Q)\}.
\end{align}
From Theorem \ref{t1}, we observe that $\triangle PQR\in \triangle$
if and only if $\rho(\triangle PQR)=1/3$ and $\triangle PQR$ is not large.
\begin{rem}
 Precisely we define $\triangle$ as follows:
 \begin{align*}
\triangle :=\{(P,Q,R)\in D^3 \mid \delta(P,Q,R)=\Delta(P,Q)\}.
\end{align*}
\end{rem}

We have 
\begin{prop}\label{triangle PQR<1}
Let $\triangle PQR$ be a triangle in ${\mathbb R}^2$.
Then, $0<\mu(\triangle PQR)<1$.
\end{prop}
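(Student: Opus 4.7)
The plan is to prove the two inequalities separately.

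For the upper bound $\mu(\triangle PQR) < 1$, I would inspect the witness triangle $\triangle P_1'P_2'P_3' = \mathcal{S}_{1-\epsilon}(\triangle P_1P_2P_3)$ built in the proof of Theorem \ref{t7}: it already satisfies conditions (1), (2), (3). In the acute case $\triangle P_1P_2P_3$ is inscribed in $S^1$, so has circumradius $1$, and its scaled version has circumradius $1-\epsilon$; in the obtuse or right case the longest side of $\triangle P_1P_2P_3$ is the diameter $P_1P_2$ of $S^1$ (length $2$), and half of the scaled longest side is $1-\epsilon$. In either case $\kappa(\triangle P_1'P_2'P_3') = 1-\epsilon$, so $\mu(\triangle PQR) \leq 1-\epsilon < 1$.

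For the lower bound $\mu(\triangle PQR) > 0$, the strategy is to exhibit a positive threshold $\kappa_0$ below which no triangle similar to $\triangle PQR$ satisfies condition (3). Normalize a representative $\triangle P_0 Q_0 R_0$ of the similarity class to have barycenter at the origin and consider $\triangle P_\lambda Q_\lambda R_\lambda := \mathcal{S}_\lambda(\triangle P_0 Q_0 R_0)$ for small $\lambda > 0$. As $\lambda \to 0^+$ all three vertices tend to the origin. Since the Beltrami-Klein metric $d'$ is continuous there (this follows from (\ref{G-1})), $d'(P_\lambda, Q_\lambda) \to 0$, so $\Delta'(P_\lambda, Q_\lambda) = \log \coth(d'(P_\lambda, Q_\lambda)/2) \to +\infty$. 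On the other hand, because hyperbolic lines in the Beltrami-Klein model are Euclidean line segments, the Euclidean foot $S_\lambda$ of the perpendicular from $R_\lambda$ to $\overline{P_\lambda Q_\lambda}$ lies on the hyperbolic line $P_\lambda Q_\lambda$, giving $\delta(P_\lambda, Q_\lambda, R_\lambda) \leq d'(R_\lambda, S_\lambda) \to 0$. Hence for all sufficiently small $\lambda$ one has $\delta < \Delta'$, and Theorem \ref{t5} forces $\rho(\psi_{\triangle P_\lambda Q_\lambda R_\lambda}) > 1/3$. Any triangle similar to $\triangle PQR$ with $\kappa < \kappa_0$ is congruent to some $\triangle P_\lambda Q_\lambda R_\lambda \subset D$ with $\rho > 1/3$, so condition (3) fails. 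Therefore $\mu(\triangle PQR) \geq \kappa_0 > 0$.

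The main obstacle is conceptual rather than computational: recognizing that as a triangle shrinks, the hyperbolic "thinness" $\Delta'$ blows up logarithmically while the hyperbolic "height" $\delta$ vanishes linearly, so the Theorem \ref{t5} condition $\delta \geq \Delta'$ eventually fails. The remaining estimates reduce to continuity of $d'$ at the origin and the elementary expansion $\log \coth(t/2) \sim \log(2/t)$ as $t \to 0^+$.
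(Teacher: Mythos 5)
Your proof is correct and takes essentially the same route as the paper: the upper bound comes from the existence (Theorem \ref{t7}) of a qualifying triangle, which necessarily has $\kappa<1$, and the lower bound comes from scaling a similar copy toward the origin so that $\Delta'\to\infty$ while $\delta\to 0$, which by Theorem \ref{t5} forces $\rho>\frac{1}{3}$ for every sufficiently small congruent copy and hence gives a positive threshold below which condition (3) fails. Your write-up is somewhat more explicit than the paper's (justifying the two limits and spelling out the final step to $\mu\geq\kappa_0>0$), but the decomposition and key ideas are identical.
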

 \begin{proof}
 For any $\triangle P'Q'R'\subset D$, we see $\kappa(\triangle P'Q'R')<1$.
 Therefore, from Theorem \ref{t7}, 
 we have $\mu(\triangle PQR)<1$.
 Let $\triangle P'Q'R' \subset D$ be similar to  $\triangle PQR$.
It is not difficult to see that
\begin{align*}
&\lim_{\lambda\to 0}\delta(\mathcal{S}_{\lambda}(P'),\mathcal{S}_{\lambda}(Q'),\mathcal{S}_{\lambda}(R'))=0,\\
&\lim_{\lambda\to 0}\Delta(\mathcal{S}_{\lambda}(P'),\mathcal{S}_{\lambda}(Q'))=\infty.
\end{align*}
Therefore, from Theorem \ref{t1}, there exists  $\lambda'>0$ such that
for every $\lambda$ with $0<\lambda<\lambda'$,
$\rho(\triangle \mathcal{S}_{\lambda}(P')\mathcal{S}_{\lambda}(Q')\mathcal{S}_{\lambda}(R'))>\frac{1}{3}$, which leads to $0<\mu(\triangle PQR)$.
 \end{proof}

We 
need some lemmas.

\begin{lem}\label{double ratio}
Let $x_i \in \mathbb{R}$ for $i=1,2,3,4$ such that $x_4 < x_1 < x_2 < x_3$.
Let $dr(x_1,x_2;x_3,x_4)$ be the double ratio
\begin{align*}
\dfrac{(x_3-x_1)(x_4-x_2)}{(x_3-x_2)(x_4-x_1)}.
\end{align*}
When keeping $x_2 - x_1$ constant, the value of $dr$ is minimized when $x_1$ and $x_2$ are symmetric with respect to the midpoint of $x_3$ and $x_4$.
\end{lem}

\begin{proof}
    We may assume that $x_3=1$ and $x_4=-1$ without loss of generality.
    We put  $L=(x_2-x_1)/2$. 
    Then, we have
\begin{align*}
&dr(x_1,x_1+2L;1,-1)-dr(-L,L;1,-1)=\\
&=\frac{4(L+x_1)^2L}{(1-2L-x_1)(L-1)^2(x_1+1)}\geq 0.
\end{align*}
The equality holds only when $x_1$ is equal to $-L$. 
Thus, we have the claim of the lemma.
\end{proof}

\begin{lem}\label{double ratio2}
Let $\{a_n^{(i)}\}_{n\geq 0}$ for $i=1,2,3,4$
be  sequences over $\mathbb{R}$ such that for all $n\geq 0$,
$a_n^{(4)}<a_n^{(1)}<a_n^{(2)}<a_n^{(3)}$ and
 $\lim_{n\to \infty}dr(a_n^{(1)},a_n^{(2)};a_n^{(3)},a_n^{(4)})=1$.
Then, 
\begin{align*}
\lim_{n\to \infty}\dfrac{a_n^{(2)}-a_n^{(1)}}{a_n^{(3)}-a_n^{(4)}}=0.
\end{align*}
\end{lem}
\begin{proof}
    We may assume that $a_n^{(3)}=1$ and $a_n^{(4)}=-1$ without loss of generality.
    We put  $L_n=(a_n^{(2)}-a_n^{(1)})/2$, $b_n^{(1)}=-L_n$ and $b_n^{(2)}=L_n$. 
    Then, we have $a_n^{(4)}<b_n^{(1)}<b_n^{(2)}<a_n^{(3)}$ and $b_n^{(2)}-b_n^{(1)}=a_n^{(2)}-a_n^{(1)}$.
    From Lemma \ref{double ratio}, for all $n\geq 0$, we have
   \begin{align*}
   dr(a_n^{(1)},a_n^{(2)};a_n^{(3)},a_n^{(4)})\geq  dr(b_n^{(1)},b_n^{(2)};a_n^{(3)},a_n^{(4)})\geq 1,
   \end{align*} 
   which implies $\lim_{n\to \infty}dr(b_n^{(1)},b_n^{(2)};a_n^{(3)},a_n^{(4)})=1$.
Since  it holds that for $n\geq 0$,
\begin{align*}
dr(b_n^{(1)},b_n^{(2)};a_n^{(3)},a_n^{(4)})=\dfrac{(1+L_n)^2}{(1-L_n)^2},
\end{align*}
we have $\lim_{n\to \infty}\frac{1+L_n}{1-L_n}=1$.
Putting for $n\geq 0$  $K_n=\frac{1+L_n}{1-L_n}$, we have $\lim_{n\to \infty}L_n=\frac{K_n-1}{K_n+1}=0$. Therefore, we have 
\begin{align*}
\lim_{n\to \infty}\dfrac{a_n^{(2)}-a_n^{(1)}}{a_n^{(3)}-a_n^{(4)}}=0.
\end{align*}
\end{proof}

\begin{lem}\label{alsogivenlemma}
Let $\triangle PQR$ be a triangle in $D$. The value of $\mu(\triangle PQR)$ is  given by any one of the equivalent quantities (1), (2), or (3).
\begin{align}\label{alsogiven}
&\text{(1)\ }\sup \{\kappa(\triangle P'Q'R')\mid \triangle P'Q'R'\sim \triangle PQR,\   \triangle P'Q'R' \in \triangle\},\\
&\text{(2)\ }\sup \{\kappa(\triangle P'Q'R')\mid \triangle P'Q'R'\sim \triangle PQR,\ \triangle P'Q'R'\subset D,  \triangle P'Q'R' \nonumber\\
&\text{ is not large.}\},\nonumber\\
&\text{(3)\ }\max \{\kappa(\triangle P'Q'R')\mid \triangle P'Q'R'\sim \triangle PQR,\   \triangle P'Q'R' \in \triangle\}.\nonumber
\end{align}
\end{lem}
\begin{proof}
The proof of (1). Let $M$ be the quantity referenced as (\ref{alsogiven}). 
We assume that $\triangle P'Q'R'\sim \triangle PQR$ and $\triangle P'Q'R' \in \triangle$.
Since $\triangle P'Q'R'$ is not large, from Lemma \ref{largelarge},
we have $\mu(\triangle PQR)\geq \kappa(\triangle P'Q'R')$.
Therefore, we have $\mu(\triangle PQR)\geq M$.
Let $\epsilon > 0$ be a sufficiently small number.
Then, there exists $\triangle P'Q'R' \subset D$ such that
$\triangle P'Q'R'\sim \triangle PQR$, 
$(1-\epsilon)\mu(\triangle PQR)=\kappa(\triangle P'Q'R')$,
and $\delta(P',Q',R')\leq\Delta(P',Q')$.
For $\nu \geq 1$, consider the triangle $\triangle P'_{\nu}Q'_{\nu}R'_{\nu}$ obtained by enlarging triangle $\triangle P'Q'R'$ around the midpoint of $P'Q'$ by a factor of $\nu$.
There exists $\nu_1>1$ such that 
$\triangle P'_{\nu_1}Q'_{\nu_1}R'_{\nu_1}$ is tangent to $S^1$.
First, we assume that $P'_{\nu_1}$ or $Q'_{\nu_1}$ lies on $S^1$.
Then, since $\lim_{\nu\to \nu_1^{-}}d(P'_{\nu},Q'_{\nu})=\infty$, we have $\lim_{\nu\to \nu_1^{-}}\Delta(P'_{\nu},Q'_{\nu})=0$.
It is not difficult to observe that for $\nu$ with $\nu_1>\nu\geq 1$, $\delta(P'_{\nu},Q'_{\nu},R'_{\nu})\geq\delta(P',Q',R')$ holds.
Therefore, there exists $\nu_2$ with $\nu_1>\nu_2\geq 1$
such that $\delta(P'_{\nu_2},Q'_{\nu_2},R'_{\nu_2})=\Delta(P'_{\nu_2},Q'_{\nu_2})$, which implies
that $\triangle P'_{\nu_2}Q'_{\nu_2}R'_{\nu_2}\in \triangle$.
Therefore, we have
$\kappa(\triangle P'_{\nu_2}Q'_{\nu_2}R'_{\nu_2})=\nu_2(1-\epsilon)\mu(\triangle PQR)$.
Next, we assume that $R'_{\nu_1}$ lies on $S^1$ and $P'_{\nu_1}$ and $Q'_{\nu_1}$ do not lie on $S^1$.
Then, since it holds that $0<\lim_{\nu\to \nu_1^{-}}
\Delta(P'_{\nu},Q'_{\nu})<\infty$
and $\lim_{\nu\to \nu_1^{-}}
\delta(P'_{\nu},Q'_{\nu},R'_{\nu})=\infty$, 
Therefore, there exists $\nu_2$ with $\nu_1>\nu_2\geq 1$
such that $\delta(P'_{\nu_2},Q'_{\nu_2},R'_{\nu_2})=\Delta(P'_{\nu_2},Q'_{\nu_2})$, which implies
that $\triangle P'_{\nu_2}Q'_{\nu_2}R'_{\nu_2}\in \triangle$.
Therefore, we have
$\kappa(\triangle P'_{\nu_2}Q'_{\nu_2}R'_{\nu_2})=\delta_2(1-\epsilon)\mu(\triangle PQR)$.
Hence, in either case, we obtain $M\geq \nu_2(1-\epsilon)\mu(\triangle PQR)$.
Considering $\epsilon\to 0$, we have
$M\geq \mu(\triangle PQR)$.

The proof of (2) is similar to that of (1). Thus, we omit the proof. 

The proof of (3).
Let $M$ be the quantity referenced as (\ref{alsogiven}).
From Proposition \ref{triangle PQR<1},  we have $0<M<1$.
There exists a sequence of triangles $\triangle P_nQ_nR_n$ such that
$\triangle P_nQ_nR_n\sim \triangle PQR$, 
$\triangle P_nQ_nR_n \in \triangle$, and 
$\lim_{n\to \infty}\kappa(\triangle P_nQ_nR_n)=M$.
Considering $(P_n,Q_n,R_n)\in (D\cup S^1)^3$ for $n\geq 0$, 
we observe that
there exists a subsequence $(P_{n_i},Q_{n_i},R_{n_i})$
which converges to a point $(P^*,Q^*,R^*)$ in $(D\cup S^1)^3$.
Since $\kappa$ is the continuous map, 
we observe that  $\triangle P^*Q^*R^*\sim \triangle PQR$ and $\kappa(\triangle P^*Q^*R^*)=M$.
If $\triangle P^*Q^*R^*$ is included in $D$, it is not difficult to see that
$\triangle P^*Q^*R^*\in \triangle$.
We assume that $\{P^*,Q^*,R^*\} \cap S^1\ne \emptyset$.
We assume that $P^* \in  S^1$
without loss of generality.
Then, we have $d(P^*,Q^*)=\infty$.
For each $n_i$, draw a hyperbolic perpendicular line from point $R_{n_i}$ to lines $P_{n_i}Q_{n_i}$, and designate the intersection point as $S_{i}$.
Let's denote $\nu>0$ as the distance in the Euclidean space between the lines $P^*Q^*$ and $R^*$.
Then, there exists $K>0$ such that
$|R_{n_i}S_{i}|>\nu/2$ for $i>K$.
Since $\lim_{i\to \infty}d(P_{n_i},Q_{n_i})=\infty$ and, $d(S_i,R_{n_i})=\Delta(P_{n_i},Q_{n_i})$ for $i\geq 0$, 
we have $\lim_{i\to \infty}d(S_i,R_{n_i})=0$.
From Lemma \ref{double ratio2}, 
we have $\lim_{i\to \infty}|R_{n_i}S_{i}|=0$, 
which contradicts that $|R_{n_i}S_{i}|>\nu/2$ for $i>K$.
Thus, we have the claim of (3).
\end{proof}

The following theorem becomes apparent from the definition of $\mu(\triangle PQR)$. However, when $\triangle PQR$ is geometrically large in the Euclidean sense, it becomes 'large' and
its geometric significance becomes more pronounced.

\begin{thm}
Let $\triangle PQR$ be an triangle in $D$. 
If $\kappa(\triangle PQR)>\mu(\triangle PQR)$, then $\triangle PQR$ is large; i.e.,
there exist two triangles inscribed in ${S^1}$ and 
circumscribing $\triangle PQR$.
\end{thm}

We give an example.

\begin{prop}\label{prop53}
\begin{figure}
    \centering
\begin{tikzpicture}
\draw(0,0) circle (2);
\draw (-1/2-0.4,1.732/2)--(-1/2-0.4,-1.732/2)--(1-0.4,0)--cycle;
 \draw (-1/2-0.4,1.732/2)node[left]{$P'$};
 \draw (-1/2-0.4,-1.732/2)node[left]{$Q'$};
  \draw (1-0.4,0)node[right]{$R'$};
 \draw (-1/2-0.4,0)node[left]{$S'$};
 
\draw[dashed] (-1/2-0.4,1.732/2)--(-1/2-0.4,0.893028554974588*2);
\draw[dashed] (-1/2-0.4,-1.732/2)--(-1/2-0.4,-0.893028554974588*2);
\draw[dashed] (2, 0)--(-2,0);
 \draw (-1/2-0.4,0.893028554974588*2)node[above]{$v_1$};
\draw (-1/2-0.4,-0.893028554974588*2)node[below]{$v_2$};
\draw (-2, 0)node[left]{$v_4$};
\draw (2,0)node[right]{$v_3$};
 \end{tikzpicture}
    \caption{Points in the proof of Proposition \ref{prop53}.}
    \label{fig:example3-2}
\end{figure}

Let $\triangle PQR$ be an equilateral triangle in ${\mathbb R}^2$.
Then, $\mu(\triangle PQR)=\frac{1}{2}$.
\end{prop}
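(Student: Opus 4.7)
My plan is to derive, via Proposition \ref{p3}, a clean closed-form condition for the third vertex $R'$ of an equilateral triangle of circumradius $r$ to lie on or outside the ellipse $E$ of Theorem \ref{t4} associated to the pair $(P',Q')$, and then apply the AM--GM inequality to the three factors $1-|V|^2$ at the vertices.

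First I will orient $\triangle P'Q'R'$ so that $P'=(u,p)$, $Q'=(u,q)$ with $p-q = r\sqrt{3}$, and hence $R'=(u+3r/2,(p+q)/2)$, with centroid $O = (u+r/2,(p+q)/2)$. I introduce the shorthand
\[
\gamma := pq + u^2 - 1, \qquad X := (1-|P'|^2)(1-|Q'|^2), \qquad Y := \gamma^2 + 3r^2 u^2.
\]
A direct expansion yields the two identities $X = \gamma^2 + 3r^2(u^2-1)$ and consequently $Y = X + 3r^2$. Substituting the formulas of Proposition \ref{p3} for $A$, $B$, $C$ into the ellipse condition $(R'_x - C)^2/A^2 + R'_y{}^2/B^2 \geq 1$, clearing denominators, and using these two identities to cancel the asymmetric terms, I expect the condition to reduce to the symmetric inequality
\[
(1-|P'|^2)(1-|Q'|^2)(1-|R'|^2) \;\leq\; \tfrac{27}{4}\, r^{4}.
\]
By Theorems \ref{t4} and \ref{t5}, this inequality is precisely the condition $\rho(\psi_{\triangle P'Q'R'}) = 1/3$.

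Second, since the three vertices lie at Euclidean distance $r$ from $O$, $\sum_V |V|^2 = 3(|O|^2 + r^2)$, so AM--GM applied to the positive quantities $1-|V|^2$ gives
\[
\prod_V (1-|V|^2) \;\leq\; \bigl(1-|O|^2 - r^2\bigr)^3 \;\leq\; (1 - r^2)^3.
\]
A one-variable calculus check shows $(1-r^2)^3 \leq \tfrac{27}{4}r^4$ for every $r \geq 1/2$, with equality exactly at $r=1/2$. Combined with the previous display, the symmetric inequality holds for every equilateral triangle of circumradius $r \geq 1/2$ contained in $D$. In particular the centered equilateral triangle of circumradius $1/2$ satisfies conditions (1)--(3) in the definition of $\mu$, so $\mu(\triangle PQR) \leq 1/2$.

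For the reverse inequality, take $r < 1/2$ and consider the centered equilateral of circumradius $r$: it has $|P|=|Q|=|R|=r$, so its triple product equals $(1-r^2)^3$, which strictly exceeds $\tfrac{27}{4}r^4$ by the reversal of the calculus check. Hence $R$ lies strictly inside the ellipse, and $\rho > 1/3$ by Theorems \ref{t4} and \ref{t5}. Since every equilateral in $D$ of circumradius $r$ is congruent to this centered copy, condition (3) fails whenever $\kappa = r < 1/2$, giving $\mu(\triangle PQR) \geq 1/2$. The main obstacle is the algebraic reduction to the symmetric inequality; its cleanliness hinges on the identity $Y - X = 3r^2$, which is precisely what makes the non-symmetric terms cancel. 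Everything afterwards is AM--GM together with one single-variable inequality.
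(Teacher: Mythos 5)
Your proposal is correct, and it takes a genuinely different --- and in several respects cleaner --- route than the paper's. The paper fixes the circumradius-$\frac12$ triangle $P=(-\frac14,\frac{\sqrt3}{4})$, $Q=(-\frac14,-\frac{\sqrt3}{4})$, $R=(\frac12,0)$, verifies $\delta\ge\Delta'$ for its horizontal translates by an explicit factorization, handles vertical translates by a separate monotonicity/inclusion argument for the ellipses of Theorem \ref{t4}, and then settles the lower bound $\mu\ge\frac12$ only with the remark that shrinking the triangle ``even slightly'' raises the rotation number. You instead reduce the ellipse condition of Proposition \ref{p3} to the position-independent symmetric criterion $(1-|P'|^2)(1-|Q'|^2)(1-|R'|^2)\le\frac{27}{4}r^4$ and finish with AM--GM plus one single-variable inequality. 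I checked the reduction you flagged as the main obstacle: with $\gamma=pq+u^2-1$, $X=(1-|P'|^2)(1-|Q'|^2)$, and $W$ the common denominator in Proposition \ref{p3}, one has $W=\gamma^2+u^2(p-q)^2$ and $X=\gamma^2-(1-u^2)(p-q)^2$, hence $W=X+3r^2$; then $W(R'_{x}-C)=XR'_{x}+\tfrac{9}{2}r^3$, and the on-or-outside condition collapses to $W\bigl(\tfrac{27}{4}r^4-X(1-|R'|^2)\bigr)\ge0$ with $W>0$, which is exactly your claim. The equality analysis is also consistent with the paper: AM--GM is tight precisely when the three quantities $1-|V|^2$ coincide and the circumcenter is at the origin, recovering the paper's observation that equality holds only in the centered position. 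Your argument buys two things the paper's does not: it treats all circumradii $r\ge\frac12$ and all positions simultaneously, with no case split into translations parallel to the two axes, and it makes the bound $\mu\ge\frac12$ fully rigorous via the centered triangle of circumradius $r<\frac12$. The only remaining task is to write out the algebra you anticipated; as verified above, it goes through exactly as stated.
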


\begin{proof}
We put  $P=(-\frac{1}{4}, \frac{\sqrt{3}}{4})$, $Q=(-\frac{1}{4}, -\frac{\sqrt{3}}{4})$,
 $R=(\frac{1}{2}, 0)$, and $S=(-\frac{1}{4}, 0)$.
Then, $\triangle PQR$ is an equilateral triangle and $\rho(\triangle PQR)=\frac{1}{3}$
 as seen in Example \ref{example1}.
 First, we consider $\mathcal{P}_{\tau_1,0}(\triangle PQR)$ for $\tau_1\in {\mathbb R}$.
 We note that $\mathcal{P}_{\tau_1,0}(\triangle PQR)\subset D$ implies
 $-\frac{\sqrt{13}-1}{4}<\tau_1<\frac{1}{2}$.
We set  $P'=\mathcal{P}_{\tau_1,0}(P)$, $Q'=\mathcal{Q}_{\tau_1,0}(Q)$,
$R'=\mathcal{P}_{\tau_1,0}(R)$, and $S'=\mathcal{P}_{\tau_1,0}(S)$ for $-\frac{\sqrt{13}-1}{4}<\tau_1<\frac{1}{2}$ (see Figure \ref{fig:example3-2}).
Let $u=-\frac{1}{4}+\tau_1$. 
Then, $P'=(u, \frac{\sqrt{3}}{4})$, $Q'=(u, -\frac{\sqrt{3}}{4})$,
$R'=(u+\frac{3}{4}, 0)$, and $S'=(u, 0)$.
We note $-\frac{\sqrt{13}}{4}<u<\frac{1}{4}$.
Furthermore, we put $v_1=(u,\sqrt{1-u^2})$, $v_2=(u,-\sqrt{1-u^2})$,
$v_3=(1,0)$, and $v_4=(-1,0)$. 
Then, we have
\begin{align*}
&d(P',Q')=\log \dfrac{\sqrt{1-u^2}+\sqrt{3}/4}{\sqrt{1-u^2}-\sqrt{3}/4},\\
&\Delta(P',Q')=\log\dfrac{e^{d(P,Q)}+1}{e^{d(P,Q)}-1}\\
&=\log\dfrac{\dfrac{\sqrt{1-u^2}+\sqrt{3}/4}{\sqrt{1-u^2}-\sqrt{3}/4}+1}{\dfrac{\sqrt{1-u^2}+\sqrt{3}/4}{\sqrt{1-u^2}-\sqrt{3}/4}-1}=\log\dfrac{4\sqrt{1-u^2}}{\sqrt{3}}.
\end{align*}
Since $\delta(P',Q',R')=d(R',S')$, we have
\begin{align*}
\delta(P',Q',R')=\dfrac{1}{2}\log \dfrac{(u+7/4)\cdot (1-u)}{(1/4-u)\cdot (u+1)}.
\end{align*}
We see
\begin{align*}
&\dfrac{(u+7/4)\cdot (1-u)}{(1/4-u)\cdot (u+1)}-\left(\dfrac{4\sqrt{1-u^2}}{\sqrt{3}}\right)^2\\
&=\dfrac{(1-u)(4u+1)^2(4u+5)}{3(u+1)(1-4u)}\geq 0,
\end{align*}
where the equality holds if and only if $u=-\frac{1}{4}$.
Therefore, we  see
\begin{align}\label{h(P',Q',R')}
\delta(P',Q',R')\geq \Delta(P',Q'),
\end{align}
where the equality holds if and only if  $\triangle P'Q'R'=\triangle PQR$.
Therefore, $\rho(\triangle P'Q'R')=\frac{1}{3}$.
Next, let us consider a translation parallel to the $y$-axis; i.e.,
we consider 
$\mathcal{P}_{0,\tau_2}(\triangle P'Q'R')\left(=\mathcal{P}_{\tau_1,\tau_2}(\triangle PQR)\right)$ 
for $\tau_2\in {\mathbb R}$.
 We note that we only consider the case of $\tau_2>0$ by symmetry.
 Therefore, we assume $\tau_2>0$ and we have
 $0<\tau_2<\sqrt{1-u^2}-\frac{\sqrt{3}}{4}$.
 We set  $P''=\mathcal{P}_{0,\tau_2}(P')$, $Q''=\mathcal{P}_{0,\tau_2}(Q')$,
$R''=\mathcal{P}_{0,\tau_2}(R')$, and $S''=\mathcal{P}_{0,\tau_2}(S')$.
 
 Then, we have
 \begin{align}\label{d(P'',Q'')}
 d(P'',Q'')=\dfrac{1}{2}\log \dfrac{(\sqrt{1-u^2}+\sqrt{3}/4+\tau_2)(\sqrt{1-u^2}+\sqrt{3}/4-\tau_2)}{(\sqrt{1-u^2}-\sqrt{3}/4+\tau_2)(\sqrt{1-u^2}-\sqrt{3}/4-\tau_2)}.
 \end{align}
 In general, if $A>B>x>0$ for $A,B,x\in \mathbb{R}$, we have
 $\frac{(A+x)(A-x)}{(B+x)(B-x)}> \frac{A^2}{B^2}$.
 Therefore, from (\ref{d(P'',Q'')}), we have
 $d(P'',Q'')>d(P',Q')$, which implies
 \begin{align}\label{Delta'(P'',Q'')<}
 \Delta(P'',Q'')<\Delta(P',Q').
 \end{align}
Let $E$ (resp., $E'$) be the ellipse related to $P',Q'$(resp., $P'',Q''$) in Theorem \ref{t4}.
Let $\Phi$ (resp., $\Phi'$) denote the inner region bounded by $E$ (resp., $E'$).
We can see from (\ref{h(P',Q',R')}) that $R'$ is not included in $\Phi$.
Because $E$ is an ellipse with its center in the $x$-axis and  one of its axes  parallel to the $y$-axis, 
we obtain  
$\Phi\subset \{(x,y)\in {\mathbb R}^2\mid x<u+\frac{3}{4} \}$. 
From the fact that line $P'Q'$ is equal to line $P''Q''$
and (\ref{Delta'(P'',Q'')<}),
we have  $\Phi' \subset \Phi$. Therefore, $R''=(u+\frac{3}{4}, \tau_2)$
is not included in $\Phi'$.
Therefore, $\rho(\triangle P''Q''R'')=\frac{1}{3}$.
Thus, we show that the rotation number 
related 
to $\triangle PQR$ is invariant under translation, which means that it is invariant under congruent transformations by symmetry.
It is observable that if we reduce $\triangle PQR$  even slightly closer to the origin, the related  rotation number is  $>\frac{1}{3}$.
As a result, the proof is complete.
\end{proof}

\subsection{Evaluation of $\mu$}
The aim of this Section is to demonstrate that $\mu(\triangle PQR)\geq \frac{1}{2}$.
From Lemma \ref{alsogivenlemma}, we have the following lemma.
\begin{lem}\label{l5-1}
Let $\triangle PQR\in \triangle$.
Then, $\mu(\triangle PQR)\geq \kappa(\triangle PQR)$.
\end{lem}

Using Lemma \ref{l5-1}, 
let us explore the lower bound of $\mu(\triangle PQR)$.

For $(u,v)\in D$ with $u>0$,
we define
\begin{align*}
&\triangle(u,v):=\{\triangle PQR \in \triangle \mid P=(-u,v), Q=(u,v), R=(x,y) \text{ with } y\ge v\},\\
&\triangle_v:=\bigcup_{0<u<\sqrt{1-v^2}}\triangle(u,v)
.
\end{align*}

We will leave the proof of the following lemma to the reader.
\begin{lem}\label{l5-2}
Let $|v|<1$.
Let $\triangle ABC\subset D$.
Then, there exists $\triangle PQR\in \triangle_v$
such that 
$\triangle PQR\sim \triangle ABC$.
\end{lem}

Now, we will show that if $\triangle PQR$ is an  isosceles triangle, then $\mu(\triangle PQR)\geq \frac{1}{2}$.

Let $0<u<\frac{\sqrt{15}}{4}$.
$P=(-u,-\frac{1}{4}), Q=(u,-\frac{1}{4})$.

Let $\triangle PQR \in \triangle(u,-\frac{1}{4})$.
From Theorem \ref{t4}, we observe that $(x,y)$ satisfies
\begin{align}\label{16u^2+15}
\dfrac{x^2}{B^2}+\dfrac{(y-C)^2}{A^2}=1,
\end{align}
where
\begin{align}\label{ABC}
&A=\dfrac{(16u^2+15)(15-16u^2)}{(16u^2+25)(16u^2+9)},\nonumber\\
&B=\dfrac{(16u^2+15)}{\sqrt{(16u^2+25)(16u^2+9)}},\\
&C=-\dfrac{256u^2}{(16u^2+25)(16u^2+9)}.\nonumber
\end{align}
Then, $R=\left(0,\frac{9-16u^2}{9+16u^2}\right)$ is on the ellipse (\ref{16u^2+15}).

\begin{figure}
    \centering
    \begin{tikzpicture}
\draw(0,0) circle (2);

\draw (0.6*2,-0.25*2)node[above]{$Q$};
\draw (-0.6*2,-0.25*2)node[above]{$P$};
\draw(0,-0.202987725*2) circle (0.9742961914*2 and 0.42249992*2);

\draw (0.968245*2,-0.5)--(-0.968245*2,-0.5);

\coordinate (P) at (-0.6*2,-0.25*2);
\fill (P) circle [radius=1.5pt];
\coordinate (Q) at (0.6*2,-0.25*2);
\fill (Q) circle [radius=1.5pt];
\draw[dashed] (0,2)--(0,-2);
\draw[dashed] (2,0)--(-2,0);

 \end{tikzpicture}

    \caption{Ellipse (\ref{16u^2+15}).}
    \label{fig:EllipseR}
\end{figure}

\begin{lem}\label{l5-8}
Let $0<u<\frac{\sqrt{15}}{4}$.
$P=(-u,-\frac{1}{4}), Q=(u,-\frac{1}{4})$ and 
$R=\left(0,\frac{9-16u^2}{9+16u^2}\right)$.
When varying the value of $u$, $\kappa(\triangle PQR)$ reaches its minimum 
$\frac{1}{2}$
at $u = \frac{\sqrt{3}}{4}$. 
\end{lem}
\begin{proof}
   Let $S=(0,-\frac{1}{4})$.
Solving 
\begin{align*}
\dfrac{|RS|}{|PS|}=\dfrac{3(15-16u^2)}{4u(9+16u^2)}
>1,
\end{align*}
we observe that  $\angle PRQ < 
\frac{\pi}{2}$ if and only if
$u < \omega_2(=0.55021\ldots)$, where
$\omega_2\in [0,1]$ is the real root of $64x^3+48x^2+36x-45=0$. 
Therefore,  we have
\begin{align}
\kappa(\triangle PQR)=\begin{cases}
    \dfrac{4096u^6 + 6912u^4 - 3024u^2 + 2025}{24(15-16u^2)(9+16u^2)}& \text{if $0<u<\omega_2$},\\
    u& \text{if $\omega_2 \leq u< \frac{\sqrt{15}}{4}$}.
\end{cases}
\end{align}
We assume that $0<u<\omega_2$.
Then, we have 
\begin{align}\label{(45+16u^2)(-3+16u^2)}
\kappa(\triangle PQR)-\frac{1}{2}=\dfrac{(45+16u^2)(-3+16u^2)^2}{24(15-16u^2)(9+16u^2)}\geq 0.
\end{align}
Equality holds for the inequality (\ref{(45+16u^2)(-3+16u^2)}) when $u = \frac{\sqrt{3}}{4}$.
Next, we assume that $\omega_2 \leq u< \frac{\sqrt{15}}{4}$.
Then, we have
\begin{align*}
\kappa(\triangle PQR)=u\geq \omega_2>\dfrac{1}{2}.
\end{align*}
Thus, we have the claim of the lemma.
\end{proof}

\begin{lem}\label{isosceles}
 Let $\triangle PQR$ 
 be 
 an  isosceles triangle in $D$.
 Then $\mu(\triangle PQR)\geq \frac{1}{2}$ and  equality holds
 when  $\triangle PQR$ is an equilateral triangle.
\end{lem}

\begin{proof}
Let $0<u<\frac{\sqrt{15}}{4}$.
$P=(-u,-\frac{1}{4}), Q=(u,-\frac{1}{4})$ and 
$R=\left(0,\frac{9-16u^2}{9+16u^2}\right)$.
   Then, we have $\dfrac{|RS|}{|PS|}=\dfrac{3(15-16u^2)}{4u(9+16u^2)}$.
Therefore, we have
\begin{align*}
\lim_{u\to 0^+}\dfrac{|RS|}{|PS|}=\infty, \text{ and } \lim_{u\to \frac{\sqrt{15}}{4}^{-}}\dfrac{|RS|}{|PS|}=0.
\end{align*}
Therefore, for any isosceles triangle, we can select some $0 < u < \frac{\sqrt{15}}{4}$ such 
that 
it  becomes similar to $\triangle PQR$.
Therefore, 
from Proposition \ref{prop53}, Lemma \ref{l5-1}, and \ref{l5-8},  we have the claim of the theorem.
\end{proof}

Let $0<u<\frac{\sqrt{15}}{4}$, 
$P=(-u,-\frac{1}{4}), Q=(u,
-\frac{1}{4}
)$, and
let $B$ be 
defined as in (\ref{ABC}).
If we take
\begin{align}\label{RRR}
R=\left(x,-\frac{(16u^2 - 15)\sqrt{-(16u^2 + 25)(16u^2 + 9)x^2+(16u^2 + 15)^2}+ 256u^2}{(16u^2 + 25)(16u^2 + 9)}\right).
\end{align}
for $0\leq x \leq B$,
then $\triangle PQR \in \triangle(u,-\frac{1}{4})$
and $R$ lies 
on  the upper half of the ellipse (\ref{16u^2+15}).
On the other hand, if we take
\begin{align}\label{RRR2}
R=\left(x,-\frac{(15-16u^2)\sqrt{-(16u^2 + 25)(16u^2 + 9)x^2+(16u^2 + 15)^2}+ 256u^2}{(16u^2 + 25)(16u^2 + 9)}\right).
\end{align}
for $\frac{\sqrt{15}}{4}\leq x\leq B$,
then $R$ is in $\triangle(u,-\frac{1}{4})$
and  
lies on  the lower half of the ellipse (\ref{16u^2+15}).

\begin{lem}\label{l5-9}
Let $0<u<\frac{\sqrt{15}}{4}$.
Let $0\leq x\leq B$, where $B$ is defined as in (\ref{ABC}).
Let $P=(-u,-\frac{1}{4})$ and $Q=(u,-\frac{1}{4})$, with $R$ being defined as in formula (\ref{RRR}).
If $x\geq u$, then $\kappa(\triangle PQR)>
\frac{1}{2}$.
\end{lem}
\begin{proof}
We assume $x\geq u$.
From the fact that $\angle PQR\geq 
\frac{\pi}{2}$, we have
\begin{align}\label{T_1+T_2}
4\kappa(\triangle PQR)^2-1=|PR|^2-1=\dfrac{T_1+T_2}{16(16u^2 + 25)^2(16u^2 + 9)^2},
\end{align}
where
\begin{align*}
&T_1=2097152xu^9 + 1048576u^{10} + 4194304x^2u^6 +8912896xu^7 + 4521984u^8+\\
&+8912896x^2u^4+ 13156352xu^5 + 1875968u^6 + 3686400x^2u^2+7833600xu^3-\\
&-4158976u^4 + 1620000xu - 3322800u^2 + 50625,\\
&T_2=8(15-16u^2)^3\sqrt{-(16u^2 + 25)(16u^2 + 9)x^2+(16u^2 + 15)^2}.
\end{align*}
We observe that $T_1$ increases as $x$ increases and $T_2>0$. We substitute $x$ with $u$ in $T_1$ and denote the resulting formula by $T_3$.
Then, we have $T_1 \geq T_3$.
We have
\begin{align*}
&T_3=3145728u^{10}+17629184u^8+23945216u^6+7361024u^4-1702800u^2+\\
&+ 50625>7361024u^4-1702800u^2+ 50625
\end{align*}
We observe that if $u\leq 0.18$ or $u\geq 0.45$, then
$7361024u^4-1702800u^2+ 50625>0$.
Therefore, if $u\leq 0.18$ or $u\geq 0.45$, then
from the formula (\ref{T_1+T_2}), we have $\kappa(\triangle PQR)>\frac{1}{2}$.

In the following, we consider the case $0.18<u<0.45$.
We first assume that $x\geq 0.4$.
We substitute $x$ with $\frac{2}{5}$ in $T_1$ and denote the resulting formula by $T_4$.
Then, 
\begin{align*}
&T_4=1048576u^{10} + \frac{4194304}{5}u^9 + 4521984u^8 + \frac{17825792}{5}u^7 + \frac{63676416}{25}u^6 +\\ &+\frac{26312704}{5}u^5 - \frac{68322816}{25}u^4 + 3133440u^3 - 2732976u^2 + 648000u + 50625
\end{align*}
Then, we have
\begin{align*}
&T_1\geq T_4>\\
&\frac{26312704}{5}u^5 - \frac{68322816}{25}u^4 + 3133440u^3 - 2732976u^2 + 648000u + 50625=\\
&=\frac{64}{25}(2055680u^2 - 1067544u + 140625)u^3+
2773440u^3 - 2732976u^2 + \\
&+648000u + 50625,
\end{align*}
where it is not difficult to see that $2055680u^2 - 1067544u + 140625>0$ and
$2773440u^3 - 2732976u^2 +648000u + 50625>0$.
Therefore, we have $T_1>0$.
Next, we assume that $x<0.4$.
Note that we only have to consider the case $0.18 < u < 0.4$, since $x \geq u$ holds.
Then, we have
\begin{align*}
&T_2>8(15-16u^2)^3\sqrt{-(16u^2 + 25)(16u^2 + 9)(2/5)^2+(16u^2 + 15)^2}\\
&=\dfrac{8}{5}(15-16u^2)^3\sqrt{5376u^4 + 9824u^2 + 4725}>8\cdot 13 (15-16u^2)^3.
\end{align*}
Therefore, we have
\begin{align*}
&T_1+T_2>T_3+8\cdot 13 (15-16u^2)^3=\\
&=3145728u^{10} + 17629184u^8 + 23519232u^6 + 8559104u^4 - 2826000u^2 +\\ &+401625>8559104u^4 - 2826000u^2 + 401625,
\end{align*}
where it is not difficult to see that $8559104u^4 - 2826000u^2 + 401625>0$.
Thus, we have the claim of the lemma.
\end{proof}

For some $R$ defined by formula (\ref{RRR2}), there exists $\triangle PQR$  such that $\kappa(\triangle PQR) < \frac{1}{2}$. Therefore, we aim to extract the characteristics of triangle $\triangle PQR$ related  to formula (\ref{RRR2}) and demonstrate $\kappa > \frac{1}{2}$ in a similar triangle related to the ellipse described in formula (\ref{daen-2}).

\begin{lem}\label{l5-10}
Let $0<u<\frac{\sqrt{15}}{4}$.
Let $\frac{\sqrt{15}}{4}\leq x\leq B$, where $B$ is in (\ref{ABC}).
Let $S=(0,-\frac{1}{4})$  with $R$ being defined as in formula (\ref{RRR2}).
Then, the slope of $SR$ is 
less than or equal to $\frac{1}{4}$.
\end{lem}
\begin{proof}
The slope of $SR$ is clearly maximized when $R = (B, C)$, and in that case, the slope is given by
\begin{align*}
&\frac{C + \frac{1}{4}}{B}=\dfrac{(16u^2-15)^2}{4(16u^2+15)\sqrt{(16u^2+25)(16u^2+9)}}=\\
&=\dfrac{1}{4}\cdot\dfrac{15-16u^2}{15+16u^2}\cdot\dfrac{\sqrt{256u^4-480u^2+225}}{\sqrt{256u^4+544u^2+225}}
\leq \dfrac{1}{4}.
\end{align*}    
\end{proof}

Let $0<u<1$, $P=(-u,0)$ and $Q=(u,0)$.
Let $\triangle PQR \in \triangle(u,0)$.
From Theorem \ref{t4}, we observe that $R=(x,y)$ satisfies 
\begin{align}\label{daen-2}
x^2+\left(\dfrac{1+u^2}{1-u^2}y\right)^2=1.
\end{align}
Therefore, we have
\begin{align}\label{RRda-2}
R=\left(x,\dfrac{(1-u^2)\sqrt{1-x^2}}{(1+u^2)}\right).
\end{align}

\begin{lem}\label{l5-11}
Let $0<u\leq x<1$, $P=(-u,0)$, $Q=(u,0)$ and
$R=\left(x,\frac{(1-u^2)\sqrt{1-x^2}}{(1+u^2)}\right)$.
If the slope of $OR$ is less than or equal to $\frac{1}{4}$,
then it holds that $\kappa(\triangle PQR)>\frac{1}{2}$.
\end{lem}
\begin{proof}
    If $u>\frac{1}{2}$ holds, then we have $|PR|>|PQ|>1$, which
    implies that $\kappa(\triangle PQR)>\frac{1}{2}$.
    Therefore, we assume that $0<u\leq \frac{1}{2}$.
    Now, we assume that the slope of $OR$ is equal to $\frac{1}{4}$.
    Then, from the equation $\frac{x}{4}=\frac{(1-u^2)\sqrt{1-x^2}}{(1+u^2)}$,
    we have
    \begin{align*}
    &x=\dfrac{4(1-u^2)}{\sqrt{17u^4-30u^2+17}}   =\dfrac{4}{5}\cdot \dfrac{5(1-u^2)}{\sqrt{17u^4-30u^2+17}}=\\
    &=\dfrac{4}{5}\cdot \dfrac{\sqrt{25u^4-30u^2+17+(8-20u^2)}}{\sqrt{17u^4-30u^2+17}}
    >\dfrac{4}{5}.
    \end{align*}
    Therefore,  we observe
    if the slope of $OR$ is less than or equal to $\frac{1}{4}$, 
    it holds that $x>\frac{4}{5}$.
    We have
\begin{align}\label{u^6+2u^5x+3u^4-2}
|PR|=\frac{\sqrt{u^6+2u^5x+3u^4+4u^3x+4u^2x^2-u^2+2ux+1}}{(u^2+1)}.
\end{align}
    Therefore, if $x=\frac{4}{5}$ holds, we have
    \begin{align*}
    |PR|^2-1=
    \frac{u(25u^5+40u^4+50u^3+80u^2+11(1-u)+29)}{25(u^2+1)^2}>0.
    \end{align*}
    Therefore, from the equation (\ref{u^6+2u^5x+3u^4-2}), if $x\geq \frac{4}{5}$, we have
    $|PR|>1$, which implies that $\kappa(\triangle PQR)>\frac{1}{2}$.
\end{proof}

\begin{thm}\label{obtuse}
Let $\triangle ABC$ be an obtuse triangle or a right triangle in $D$.
 Then,   $\mu(\triangle ABC)>\frac{1}{2}$.
\end{thm}
\begin{proof}
From Lemma \ref{l5-2}, there exists $\triangle PQR\in \triangle_{-\frac{1}{4}}$
such that $\triangle ABC \sim \triangle PQR$.
If $R$ is defined by formula (\ref{RRR}), then, by Lemma \ref{l5-9}, we have $\kappa(\triangle PQR) > \frac{1}{2}$.
Therefore, we have $\mu(\triangle ABC)>\frac{1}{2}$.

We assume that  $R$ is defined by formula (\ref{RRR2}).
From Lemma \ref{l5-2}, there exists $\triangle P'Q'R'\in \triangle_{0}$
such that $\triangle ABC \sim \triangle P'Q'R'$.
From Lemma \ref{l5-10} and \ref{l5-11}, we obtain that
$\kappa(\triangle P'Q'R') > \frac{1}{2}$.
Hence, we have $\mu(\triangle ABC)>\frac{1}{2}$.
\end{proof}

\begin{lem}\label{symmetrically}
Let $E$ be an ellipse.
Suppose two points $P, Q$ inside $E$ lie  symmetrically with respect to the minor axis of $E$.
Then, among the points $R$ on $E$, the circumradius of $\triangle PQR$ takes its minima when $RP=RQ$.
\end{lem}

\begin{proof}
Suppose $E$ is defined by the equation $\frac{x^2}{a^2}+\frac{y^2}{b^2}=1$, with $a>b>0$, and $P=(-u,h), Q=(u,h)$.
Since $P$ and $Q$ are inside $E$, $u$ and $h$ satisfy the inequality
\begin{equation}\label{uh-ineq}
\frac{u^2}{a^2}+\frac{h^2}{b^2}-1<0.
\end{equation}
A circle passing through $P, Q$ has the center on the $y$-axis.
By putting $(0,t)$ as center, we can express such a circle by 
\[ \Gamma_t \colon x^2+(y-t)^2=(t-h)^2+u^2. \]
Note that the radius of $\Gamma_t$ is equal to $r=\sqrt{(t-h)^2+u^2}$, so it takes minima when $|t-h|$ does.

Let $R=(a\cos\theta, b\sin\theta)$ be a point on $E$.
Since 
\[ t=\frac{h^2+u^2-x^2-y^2}{2
   (h-y)},\]
$\Gamma_t$ is the circumcircle of $\triangle PQR$ when
\[ t=\frac{h^2+u^2-a^2+(a^2-b^2) \sin ^2\theta}{2 (h-b \sin \theta)},\]
where $t$ goes to $\pm\infty$ when $\sin\theta=\frac{h}{b}$.
Then
\[\frac{dt}{d\theta}=\frac{\cos \theta \left[ -b\left(a^2-b^2\right) \sin ^2\theta+2 h
   \left(a^2-b^2\right) \sin \theta+b
   \left(-a^2+h^2+u^2\right)\right]}{2 (h-b \sin \theta)^2}\]
vanishes only when $\theta=\pm\frac{\pi}{2}+2n\pi$, because the part
\[ -b\left(a^2-b^2\right) \sin ^2\theta+2 h
   \left(a^2-b^2\right) \sin \theta+b
   \left(-a^2+h^2+u^2\right)\]
as a quadratic of $X=\sin\theta$ has the discriminant
\begin{align*}
Di/4 &=h^2\left(a^2-b^2\right)^2+b^2\left(a^2-b^2\right)\left(-a^2+h^2+u^2\right)\\
&=a^2b^2\left(a^2-b^2\right)\left(\frac{u^2}{a^2}+\frac{h^2}{b^2}-1\right) <0.
\end{align*}
Thus we see that $t$ takes extreme values when $\theta=\pm\frac{\pi}{2}$.
It is easy to see that the circumradius of $\triangle PQR$ takes the minima when $R=(0, \pm b)$.
\end{proof}

\begin{thm}\label{maintheorem}
 Let $\triangle PQR$ 
 be 
 an triangle in $D$.
 Then $\mu(\triangle PQR)\geq \frac{1}{2}$ and  equality holds
 when  $\triangle PQR$ is an equilateral triangle.
\end{thm}

\begin{proof}
Let $0<u<\frac{\sqrt{15}}{4}$.
$P=(-u,-\frac{1}{4}), Q=(u,-\frac{1}{4})$.
Let $\triangle PQR \in \triangle(u,-\frac{1}{4})$ and be an acute triangle and not an  isosceles triangle.
We put $R'=\left(0,\frac{9-16u^2}{9+16u^2}\right)$.
Then, $\triangle PQR' \in \triangle(u,-\frac{1}{4})$ and 
$\triangle PQR'$ is an  isosceles triangle.
First, we assume that $\triangle PQR'$ is the acute triangle.
Then, from Lemma \ref{symmetrically}, we have
$\kappa(\triangle PQR)>\kappa(\triangle PQR')$.
Since from Lemma \ref{isosceles},  $\kappa(\triangle PQR')\geq \frac{1}{2}$ holds, 
we have $\kappa(\triangle PQR)>\frac{1}{2}$.
Next, we assume that $\triangle PQR'$ is the obtuse triangle or the right triangle.
Then, from the proof of Lemma \ref{l5-8}, we observe that 
$u\geq \omega_2(=0.55021\ldots)$. 
Since the circumradius of $\triangle PQR$ is equal to $u/\sin \angle PRQ$, 
it is greater than $\frac{1}{2}$.
Therefore,  we have
$\kappa(\triangle PQR)>\frac{1}{2}$.
Thus, from Lemma \ref{isosceles} and Theorem \ref{obtuse}, we have the claim of the theorem.
\end{proof}

\subsection{Explicit formula}
In this section, we provide an explicit formula for $\mu(\triangle PQR)$ when
$\triangle PQR$ is an  isosceles triangle.

\begin{lem}\label{maxisosceles}
Let $\triangle PQR$ be an isosceles triangle with $R$ as the vertex, and
$\triangle PQR\in\triangle$.
Then, there exists an isosceles triangle $\triangle P'Q'R'\in\triangle$
with $R'$ as the vertex such that $\triangle PQR\sim \triangle P'Q'R'$, 
$|OP'|=|OQ'|$, and  $\kappa(\triangle PQR)\leq \kappa(\triangle P'Q'R')$.
\end{lem}
\begin{proof}
We assume that $|OP'|\ne|OQ'|$. 
 Let $v_1, v_2\in {S^1}$ be two points at infinity, which intersect the hyperbolic line $PQ$ with $|Pv_1|<|Qv_1|$.
Let $S$ be the the midpoint of $P$ and $Q$.
Let $P',\ Q'$ be the points on the line $PQ$ such that
$|P'Q'|=|PQ|$ and $P'$ and $Q'$ are symmetric with respect to $S$.
Let $\triangle P'Q'R'$ be congruent to $\triangle PQR$, 
with $R$ and $R'$ positioned on the same side with respect to the line $PQ$.
Since $\triangle P'Q'R'$ is the isosceles
triangle, $\triangle P'Q'R'$ is included in $D$.
Since $d(P,Q)=\frac{1}{2}\log (\frac{|Pv2||Qv_1|}{|Pv1||Qv_2|})$, 
from Lemma \ref{double ratio}, we have 
$d(P,Q)> d(P',Q')$.
Therefore, we have $\Delta(P,Q)<\Delta(P',Q')$, which implies that
$\delta(P',Q',R')<\Delta(P',Q')$.
Hence, we observe that 
there exists a certain $\eta > 1$  such that, upon enlarging the triangle $\triangle P'Q'R'$ around the center $S$ by a factor of $\eta$, the resulting triangle $\triangle P''Q''R''$ belongs to  $\triangle$.
Thus, we have $\kappa(\triangle PQR)< \kappa(\triangle P''Q''R'')$.
 
\end{proof}
For $s>0$, we define 
\begin{align*}
q_s(X)=(s^4+ 2s^2+1)X^3+(- 6s^3+10s)X^2+(12s^2-1)X-8s.
\end{align*}
The polynomial $q_s(X)$ plays an important role in stating a theorem.

\begin{lem}
The polynomial $q_s(X)$ has a sole root in $(0,1)$.
\end{lem}
\begin{proof}
It holds that
    $q_s(0)=-8s<0$ and
    $q_s(1)=s^2(s-3)^2+5s^2+2s>0$.
Therefore, there exist either one or three roots in the interval $(0,1)$.    The discriminant of $q_s(X)$ is $-4(27s^2 - 1)^3$.
    Therefore, if $|s|>\frac{1}{3\sqrt{3}}$, then $q_s(X)$ has a unique root in $(0,1)$.
    We assume that $|s|\leq \frac{1}{3\sqrt{3}}$ and $q_s(X)$ has three roots in $(0,1)$. We put $\alpha_1, \alpha_2, \alpha_3$ as the roots of $q_s(X)$.
    Then, we have
    \begin{align*}
    \alpha_1+\alpha_2+\alpha_3=\dfrac{(6s^2-10)s}{s^4+ 2s^2+1}
    \leq \dfrac{(6/27-10)s}{s^4+ 2s^2+1}<0,
    \end{align*}
   which contradicts that $\alpha_i\in (0,1)$ for $i=1,2,3$.
   Therefore, we have the claim of the lemma.
\end{proof}

For $s>0$, we denote the root of $q_s(X)$ included in $(0,1)$
by $\xi(s)$.

\begin{thm}
Let $\triangle PQR$ be an isosceles triangle with $R$ as the vertex.
We set $s=\frac{|RS|}{|QS|}$, where $S$ is the midpoint of $PQ$.
Then, 
\begin{align*}
\mu(\triangle PQR)=\begin{cases}\xi(s)&s\leq 1,\\
\dfrac{(s^2+1)\xi(s)}{2s}& 1<s.\\
\end{cases}
\end{align*}
\begin{proof}
From Lemma \ref{alsogivenlemma} and Lemma \ref{maxisosceles},
we observe that 
\begin{align*}
\mu(\triangle PQR)=\max\{\kappa(\triangle P'Q'R')|\triangle P'Q'R'\in \bigcup_{v\in(-1,1)}\triangle_v,  \triangle P'Q'R'\sim\triangle PQR\}.
\end{align*}
For $(u,v)$ with $u^2+v^2<1$ and $u>0$,
let $\triangle P'Q'R' \in \triangle(u,v)$
and $\triangle P'Q'R'\sim\triangle PQR$.
From Theorem \ref{t4}, we observe that $(x,y)$ satisfies
\begin{align*}
\dfrac{x^2}{B^2}+\dfrac{(y-C)^2}{A^2}=1,
\end{align*}
where
\begin{align*}
&A=\dfrac{(v^2 - u^2 - 1)(u^2+v^2-1)}{(v^2 + u^2 + 2v + 1)(v^2 + u^2 - 2v + 1)},\\
&B=\dfrac{(1+u^2-v^2)\sqrt{1-v^2}}{\sqrt{-((v + 1)u^2 + (v^2 - 1)v - v^2 + 1)((v - 1)u^2 + (v^2 - 1)v + v^2 - 1)}},\\
&C=\dfrac{4vu^2}{(v^2 + u^2 + 2v + 1)(v^2 + u^2 - 2v + 1)}.
\end{align*}
First, we assume that $s\leq 1$.
Then, we have $\kappa(\triangle P'Q'R')=u$.
Therefore,  we have
\begin{align}\label{Lagrange}
\mu(\triangle PQR)=\max \left\{u\; \middle|\; \dfrac{A+C-t}{u}=s, u^2+v^2<1,u>0 \right\}. 
\end{align}

Then, by using the method of Lagrange multipliers, 
there exists $\lambda$ such that
the solution of (\ref{Lagrange}) satisfies the following equations:
\begin{align*}
&1=\lambda\left(\dfrac{A+C-t}{u}\right)_u,\\
&0=\lambda\left(\dfrac{A+C-t}{u}\right)_v,\\
&s=\dfrac{A+C-t}{u}.
\end{align*}
By eliminating $\lambda$, we obtain
\begin{align}
&v^4 + 2v^2u^2 + u^4 + 4v^3 + 6v^2 - 2u^2 + 4v + 1=0,\label{v^4 + 2v^2u^2}\\
&sv^2u + su^3 + v^3 + 2svu + vu^2 + v^2 + su + u^2 - v - 1=0\label{sv^2u + su^3}.
\end{align}
The left-hand side of equation (\ref{v^4 + 2v^2u^2}) is denoted as $f$, and the left-hand side of equation (\ref{sv^2u + su^3}) is denoted as $g$.
We consider $u$, $v$, and $s$ as variables, and by examining the Gröbner basis of the ideal generated by $f$ and $g$ in $\mathbb{Q}[u, v, s]$, we obtain the following:
\begin{align*}
(s^4+ 2s^2+1)u^3+(- 6s^3+10s)u^2+(12s^2-1)u-8s=0.
\end{align*}
Since it holds $0<u<1$, we have $u=\xi(s)$.
Therefore, we have $\mu(\triangle PQR)=\xi(s)$.
The proof can be similarly extended to the case of $s > 1$, and thus, the details of the proof are omitted.
\end{proof}
   
\end{thm}
Figure \ref{fig:isocelesmu} depicts the graph of $\mu(\triangle PQR)$ for an isosceles triangle $\triangle PQR$ with $R$ as the vertex, where $s = \frac{|RS|}{|QS|}$ and $S$ is the midpoint of $PQ$.

\begin{figure}[h]
\centering
\includegraphics[width=12truecm]{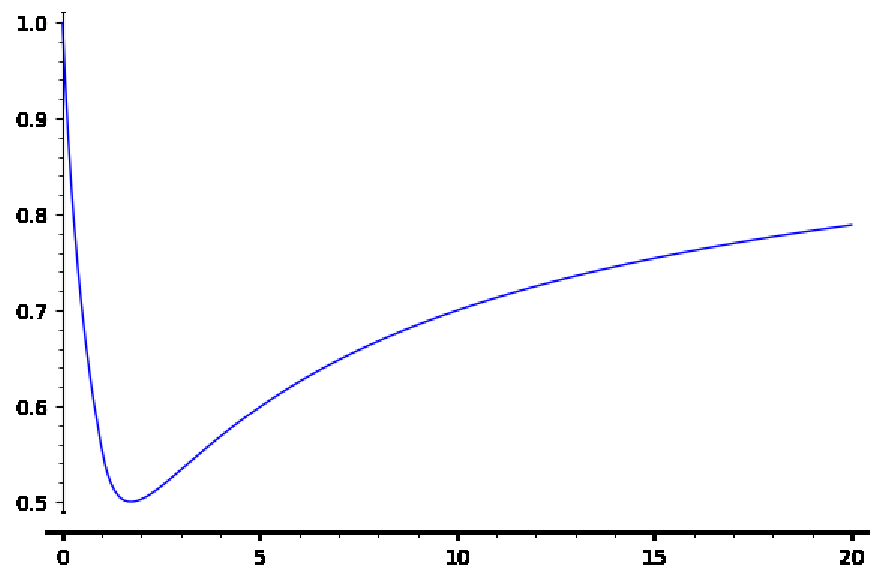}
\caption{$\mu(\triangle PQR)$
corresponding to $s = \frac{|RS|}{|QS|}$.}
\label{fig:isocelesmu}
\end{figure}

\vspace{2cm}

\noindent
Takeo Noda: Faculty of Science, Toho University, JAPAN\\
{\it E-mail address: noda@c.sci.toho-u.ac.jp}

\noindent
Shin-ichi Yasutomi: Faculty of Science, Toho University, JAPAN\\
{\it E-mail address: shinichi.yasutomi@sci.toho-u.ac.jp}
\end{document}